\documentclass[11pt]{amsart}

\usepackage{amsmath,amssymb}
\usepackage{indentfirst}
\usepackage[all]{xy}
\usepackage{color}
\pagestyle{myheadings}
\setlength{\voffset}{-0.5in} \setlength{\textheight}{9in}

\newtheorem{theorem}{Theorem}[section]
\newtheorem{lemma}[theorem]{Lemma}
\newtheorem{corollary}[theorem]{Corollary}

\newtheorem{definition}[theorem]{Definition}

\newtheorem{proposition}[theorem]{Proposition}

\newcommand{\CW}{\operatorname{CW}}

\newcommand{\N}{\mathbb{N}}

\newcommand{\Z}{\operatorname{{\mathbb Z}}}

\renewcommand{\N}{\operatorname{{\mathbb N}}}
\newcommand{\Q}{\operatorname{{\mathbb Q}}}

\newcommand\dirlim{\operatorname{dirlim}}
\newcommand\ANR{\operatorname{ANR}}
\newcommand\PL{\operatorname{PL}}

\newcommand\inter{\operatorname{int}}

\newcommand\mesh{\operatorname{mesh}}
\newcommand\diam{\operatorname{diam}}
\newcommand\id{\operatorname{id}}
\newcommand\st{\operatorname{st}}

\newcommand\inv{^{-1}}

\begin{document}

\title{Alternate Proofs for the $n$-dimensional Resolution Theorems}
\author{Leonard R. Rubin}

\address{Department of Mathematics\\
University of Oklahoma\\
Norman, Oklahoma 73019\\
USA}\email{lrubin@ou.edu}

\author{Vera Toni\' c}

\address{Department of Mathematics\\
University of Rijeka\\
51000 Rijeka\\ Croatia}\email{vera.tonic@math.uniri.hr}

\date  {1 August 2021}

\begin{abstract}We present new, unified proofs for the cell-like-, $\Z/p$-,
and $\Q$-resolution theorems.  Our arguments employ extensions that are much
simpler than those used by our predecessors.  The techniques allow us to solve problems
involving cohomology groups by converting them into problems about homology groups.
We provide a coordinated general topological method for constructing the maps needed to
witness the resolution theorems simultaneously.
\end{abstract}

\subjclass[2010]{54C55, 54C20}

\keywords{absolute co-extensor, absolute neighborhood retract, cell-like, cohomological dimension,
compactum, $\CW$-complex, dimension, Eilenberg-MacLane $\CW$-complex, $G$-acyclic, inverse sequence}

\maketitle 

\markboth{L. Rubin and V. Toni\' c}{Alternate Proofs for the Resolution Theorems}

%This is the same file as the file called "resolutionsindimn22".

\section[Introduction]
{Introduction}\label{intro}

For each abelian group $G$ and compact metrizable space $X$, $\dim_G X$ will denote
the cohomological dimension of $X$ modulo $G$.  One can find definitions of and facts about the theory
of cohomological dimension in \cite{Dr} and \cite{Ku}.  However, the material in Section \ref{EMacs}
will be sufficient for our needs. Throughout this paper, map means continuous function, and 
$\N$ starts with 1.

We are going to provide alternate, simpler, and unified proofs of the cell-like-, $\Z/p$-, and $\Q$-resolution
theorems in dimension $n$ (\cite{Ed}, \cite{Wa}, \cite{Dr}, \cite{Le}):

\begin{theorem}\label{cellresol}  Let  $n\in\N$ and $X$ be a nonempty metrizable compactum
with $\dim_{\Z}X\leq n$.  Then there exists a metrizable compactum $Z$ with $\dim Z\leq n$
and a surjective\footnote{It should be remarked that cell-like sets are never empty--see
Definition \ref{deftrivsh} and the remark just before it;
so the term surjective here is redundant.} cell-like map $\pi:Z\to X$.
\end{theorem}

\begin{theorem}\label{Zmodpreol} Let  $n\in\N$, $p\in\N_{\geq2}$, and $X$ be a nonempty metrizable compactum
with $\dim_{\Z/p}X\leq n$.  Then there exists a metrizable compactum $Z$ with $\dim Z\leq n$
and a surjective $\Z/p$-acyclic map $\pi:Z\to X$.
\end{theorem}

\begin{theorem}\label{rationalres} Let  $n\in\N_{\geq2}$ and $X$ be a nonempty metrizable compactum
with $\dim_{\Q}X\leq n$.  Then there exists a metrizable compactum $Z$ with $\dim Z\leq n$
and a surjective $\Q$-acyclic map $\pi:Z\to X$.
\end{theorem}

All terms with which the reader is not familiar will be defined in Section \ref{acyclicity}.
Theorem \ref{cellresol} was stated in \cite{Ed} and proved in \cite{Wa}.  Theorems
\ref{Zmodpreol} and \ref{rationalres} along with proofs can be found in
\cite{Dr} and \cite{Le} respectively.  All three of the resolution theorems are true in case $n=0$.
This holds because for any abelian group $G$ and nonempty metrizable compactum $X$,
$\dim_G X=0$ if and only if $\dim X=0$, and all singleton spaces are both cell-like and
$G$-acyclic (see Section \ref{acyclicity}). So one may take $Z=X$ and $\pi=\id_X$.
Theorem \ref{cellresol} with $n=1$ is true for a similar reason because, as stated in \cite{Wa},
$\dim_{\Z}X=1$ if and only if $\dim X=1$. Theorem \ref{rationalres} fails to be true
for $n=1$ as remarked in \cite{Le}.

One feature of all three of the previous proofs of the resolution theorems
is that they rely on a version of a result due to R. Edwards (see Theorem 4.2 of \cite{Wa})
even if somewhat hidden inside the arguments.
Because of this dependence, in each case the proofs employed relatively complicated
``extensions,'' e.g., Edwards-Walsh resolutions as in \cite{Wa} and \cite{Dr} (see also \cite{KY}),
and even more intricate ones in \cite{Le}.  The extensions were associated with triangulated compact polyhedra
that appeared in an inverse sequence representing the target space $X$.

Our main results yield unified proofs of Theorems \ref{cellresol},
\ref{Zmodpreol}, and \ref{rationalres} that do not rely on forms of Edwards' result.  As a consequence,
although we will use extensions (Sections \ref{fundext}, \ref{lemforext}, \ref{theextensions}, and \ref{propfor}),
these will be quite mild in comparison with those that were employed by our predecessors.
Furthermore, we are going to organize the previously disparate techniques
by providing a series of results that involve inverse sequences of compact
polyhedra (Section \ref{acyclicity}), the application
of basic notions of algebraic topology that connect homology with cohomology
(Sections \ref{acyclicity} and \ref{stars}), the theory of triangulated finite
polyhedra (Sections \ref{fundext} and \ref{lemforext}),
the connection between the theory of cohomological dimension and
Eilenberg-MacLane $\CW$-complexes (Section \ref{EMacs}),
and point-set topology (Sections \ref{ExtenLem}, \ref{Technicallemma}, and \ref{Recursion}).
All spaces that require a metric will be embedded in the Hilbert cube $I^\infty$
and their metrics will be induced from a fixed metric
$\rho$ on $I^\infty$ (Section \ref{ExtenLem}).   Section \ref{proofofmain} contains the
final steps of our proof of the resolution theorems.

\section[Acyclicity]
{Acyclicity}\label{acyclicity}

In this section we will provide the needed definitions of
``acyclicity'' including the concept of being ``cell-like.''
We will also have lemmas showing how to detect these properties
in the settings that will occur later.  The term {\it continuum} will refer
to a compact, connected Hausdorff space.  The notion of a space having the {\it shape of a point}
can be found on page 45 of \cite{MS}.  It can be seen from the remarks in the
first paragraph of page 45 of \cite{MS} that
whenever a space has the shape of a point, it cannot be empty and must be connected.

\begin{definition}\label{deftrivsh}A compact Hausdorff space is said to be
{\bf cell-like} if it has the shape of a point.
\end{definition}

It follows from this that each cell-like space must be a nonempty continuum.
In the original definition of the term cell-like, which was introduced by C. Lacher
(consult \cite{Ru}), it was required that such a space be metrizable.  The current loosening
of the requirement of metrizability is for convenience since this is essential for
understanding the main result, the cell-like resolution theorem for compact Hausdorff spaces, in
\cite{MR}; it has no impact on the work herein. Other equivalent definitions of cell-likenes can be found, for example, in \cite{Dr}.

\begin{definition}\label{defofGacyc}
Let $G$ be an abelian group and $X$ a continuum. One
says that $X$ is $G$-{\bf acyclic} if for all $k\in\N$,
${\Check H}^k(X;G)=0$.
\end{definition}

Note that if one wants to include $k=0$ in Definition \ref{defofGacyc}, then \v Cech cohomology should be reduced.
All singleton spaces are cell-like and are $G$-acyclic for any abelian group $G$.

\begin{definition}\label{trivshamap}Let $\pi:Z\to X$ be a
map and $G$ an abelian group.  It is said that $\pi$ is:
\begin{enumerate}
\item  {\bf cell-like} if each of its fibers is cell-like;
\item $G$-{\bf acyclic} if each of its fibers is $G$-acyclic.
\end{enumerate}
\end{definition}

Some authors require that in (1) of the preceding, the map $\pi$ be proper.  Since all the
spaces to which we apply Definition \ref{trivshamap} will be compact and metrizable,
then the maps we produce will be proper anyway.  So we will lose nothing by deleting
the properness requirement.  The maps $\pi$ that we shall design to satisfy
the resolution theorems will be surjective by construction.

\begin{lemma}\label{trivshape}Let $\mathbf{X}=(X_j,g_j^{j+1})$
be an inverse sequence of nonempty metrizable compacta and
$X=\lim\mathbf{X}$.  Then $X$ is cell-like if for each
$j\in\N$, there exists $i>j$ such that $g_j^i:X_i\to X_j$ is
null homotopic.\qed
\end{lemma}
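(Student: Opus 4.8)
The plan is to verify that $X$ has trivial shape by way of the classical criterion that a compact metrizable space is cell-like precisely when every map from it into an absolute neighborhood extensor (equivalently, into a CW-complex or a polyhedron) is null homotopic. Thus I would fix an arbitrary ANR $P$ and a map $f\colon X\to P$, and aim to show that $f$ is null homotopic. Throughout, $p_i\colon X\to X_i$ denotes the canonical projection of the inverse limit, so that $p_i=h_i^j\circ p_j$ whenever $j>i$, where $h_i^j=h_i^{i+1}\circ\cdots\circ h_{j-1}^j$ is the composite bonding map.

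First I would invoke the standard fact that, because $X=\lim\mathbf{X}$ is the limit of an inverse sequence of compacta and $P$ is an ANR, the family $\{p_i\}$ serves as an expansion of $X$; consequently every map $f\colon X\to P$ factors, up to homotopy, through a finite stage. That is, there exist an index $i\in\N$ and a map $g_i\colon X_i\to P$ with $f\simeq g_i\circ p_i$. Granting this, I would apply the hypothesis to the index $i$ to obtain $j>i$ for which the composite bonding map $h_i^j\colon X_j\to X_i$ is null homotopic. Then
\[
  f\;\simeq\;g_i\circ p_i\;=\;g_i\circ h_i^j\circ p_j,
\]
and since $h_i^j$ is null homotopic, so is the composite $g_i\circ h_i^j\colon X_j\to P$; composing on the right with $p_j$ preserves this, whence $f\simeq\ast$. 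As $P$ and $f$ were arbitrary, every map of $X$ into an ANR is null homotopic, which is exactly the condition that $X$ be cell-like.

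The step I expect to be the main obstacle is the homotopy factorization of $f$ through a single $X_i$: one must use that $P$ is an absolute neighborhood extensor to extend $f$ over a neighborhood of $X$ in a suitable ambient space (for instance in $\prod_i X_i$, in which $X$ sits as the inverse limit), and then exploit both that basic neighborhoods of $X$ depend on only finitely many coordinates and that $X$ is compact in order to push the map down to a finite stage. A small additional point is to confirm that $X$ is a nonempty continuum, as demanded by Definition \ref{deftrivsh}: nonemptiness follows from the inverse limit of nonempty compacta being nonempty, while connectedness is already a consequence of the null-homotopy conclusion applied to maps into a two-point space.
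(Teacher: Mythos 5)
Your argument is correct; note that the paper offers no proof of this lemma at all (it is stated with a \qed as standard shape theory), so there is nothing to diverge from. The two ingredients you use are exactly the right ones: the classical characterization that a nonempty metrizable compactum has trivial shape if and only if every map of it into an ANR is null homotopic, and the factorization of a map $f\colon X\to P$ through some finite stage up to homotopy. For the latter, be aware that what one actually gets from the limit is only an $\epsilon$-close factorization --- this is precisely the paper's Lemma \ref{MarSegRes}, i.e.\ property (R1) of Marde\v{s}i\'{c}--Segal --- and one then upgrades closeness to homotopy using that sufficiently near maps into an ANR are homotopic; your phrase ``the family $\{p_i\}$ serves as an expansion'' is slightly loose, since the $X_i$ are arbitrary metrizable compacta rather than ANRs or polyhedra, so $\mathbf{X}$ is not literally an ANR-expansion of $X$, but the factorization property you need holds nonetheless and is exactly what Lemma \ref{MarSegRes} supplies. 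The remaining steps --- composing with the null homotopic bonding map, and checking that $X$ is a nonempty continuum as required by Definition \ref{deftrivsh} --- are handled correctly.
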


Let $G$ be an abelian group.  There are several ways to demonstrate
that a given metrizable continuum $X$ is $G$-acyclic. The continuity
of \v Cech cohomology shows that:

\begin{lemma}\label{contofChe}For any abelian group $G$ and
inverse sequence $\mathbf{D}=(D_j,g_j^{j+1})$ of nonempty
compact polyhedra with $D=\lim\mathbf{D}$, $$\check H^k(D;G)=
\dirlim(H^k(D_j;G),H^k(g_j^{j+1};G)),$$ where for each $j$,
$H^k(g_j^{j+1};G):H^k(D_j;G)\to H^k(D_{j+1};G)$ is the
homomorphism induced by $g_j^{j+1}:D_{j+1}\to D_j$.\qed
\end{lemma}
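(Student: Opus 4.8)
The plan is to recognize this lemma as the classical continuity property of \v Cech cohomology, combined with the fact that on a compact polyhedron \v Cech cohomology agrees with ordinary (simplicial or singular) cohomology. The limit projections $p_i:D\to D_i$ satisfy $p_i=p_i^{i+1}\circ p_{i+1}$, so the induced maps $p_i^*:H^k(D_i;G)\to\check H^k(D;G)$ obey $p_i^*=p_{i+1}^*\circ H^k(p_i^{i+1};G)$. Hence they form a compatible family on the direct system $(H^k(D_i;G),H^k(p_i^{i+1};G))$ and assemble into a canonical homomorphism $\Phi:\dirlim(H^k(D_i;G),H^k(p_i^{i+1};G))\to\check H^k(D;G)$. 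The entire content of the lemma is that $\Phi$ is an isomorphism, so the first step is just to set up $\Phi$ and then to prove surjectivity and injectivity via the two ingredients below.

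For the continuity ingredient I would realize $\check H^k(D;G)$ as the direct limit of $H^k(N(\mathcal U);G)$ over the finite open covers $\mathcal U$ of the compactum $D$, ordered by refinement, where $N(\mathcal U)$ is the nerve. The key geometric step is a cofinality argument: since the indexing is linearly ordered, a finite intersection $\bigcap_{i\le m}p_i^{-1}(U_i)$ can be absorbed into a single $p_m^{-1}(V)$, so the sets $p_i^{-1}(U)$, with $U$ open in some $D_i$, form a basis for $D$. By compactness of $D$ every finite open cover is then refined by the $p_i$-preimage of a finite open cover $\mathcal V$ of some $D_i$, whose nerve $N(p_i^{-1}(\mathcal V))$ is carried by $N(\mathcal V)$. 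Thus the pulled-back covers are cofinal, and reorganizing the single direct limit over covers of $D$ as an iterated limit indexed first by $i$ and then by the covers of $D_i$ yields $\check H^k(D;G)=\dirlim_i\check H^k(D_i;G)$, with bonding homomorphisms induced by the $p_i^{i+1}$. This is precisely the continuity theorem for \v Cech cohomology on inverse systems of compact Hausdorff spaces, and I would simply cite it (see \cite{Sp}) rather than reproduce the nerve computation.

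For the second ingredient, because each $D_i$ is a compact polyhedron, hence a CW-complex, the comparison map from \v Cech to singular cohomology is an isomorphism $\check H^k(D_i;G)\cong H^k(D_i;G)$, and this comparison is natural with respect to the $p_i^{i+1}$. Consequently the direct systems $(\check H^k(D_i;G))$ and $(H^k(D_i;G))$ are isomorphic, so their direct limits agree. Splicing this identification into the continuity isomorphism of the previous paragraph gives $\check H^k(D;G)\cong\dirlim(H^k(D_i;G),H^k(p_i^{i+1};G))$, and tracing the projection-induced maps through the identifications shows that this isomorphism is exactly $\Phi$, which finishes the argument.

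I expect the main obstacle to be entirely inside the continuity step: namely the cofinality of the pulled-back covers together with the attendant subtlety that the projections $p_i$ need not be surjective (so a pulled-back family covers $D$ without its source family covering $D_i$), and the bookkeeping needed to refactor the direct limit over covers of $D$ as the iterated limit over $(i,\mathcal V)$. Once cofinality is in hand, everything else is formal manipulation of direct limits and naturality of the \v Cech--singular comparison on polyhedra. For this reason, in the actual write-up I would state the two ingredients, invoke \cite{Sp} for continuity, and merely note the \v Cech--singular agreement on polyhedra, which is why the statement can reasonably be recorded with a \qed and no displayed proof.
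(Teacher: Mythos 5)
Your proposal is correct and matches the paper's treatment: the paper offers no displayed proof, simply invoking the continuity of \v Cech cohomology (with the tacit identification of \v Cech and simplicial/singular cohomology on compact polyhedra), which is exactly the two-ingredient argument you outline. The cofinality of the pulled-back covers and the naturality of the comparison isomorphism are the standard points, and your handling of them is sound.
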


\begin{lemma}\label{contCech}Let $\mathbf{D}=(D_j,g_j^{j+1})$
be an inverse sequence of nonempty compact polyhedra,
$D=\lim\mathbf{D}$, $k\in\N$, and $G$ an abelian group.  Then
$\Check H^k(D;G)=0$ if for each $j$, there exists $i>j$ such
that $H^k(g_j^i;G): H^k(D_j;G)\to H^k(D_i;G)$ is the trivial
homomorphism. \qed
\end{lemma}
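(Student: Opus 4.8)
The plan is to reduce the statement to a purely formal fact about direct limits by invoking the continuity of \v Cech cohomology. By Lemma \ref{contofChe} we have the identification
\[
\check H^k(D;G)=\dirlim\bigl(H^k(D_i;G),H^k(p_i^{i+1};G)\bigr),
\]
so it suffices to prove that this direct limit of the directed system of abelian groups is the trivial group. Everything then hinges on the elementary observation that a direct limit in which each term is eventually annihilated must vanish.

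First I would recall how elements of the direct limit are described. Every element of $\dirlim(H^k(D_i;G),H^k(p_i^{i+1};G))$ is represented by some $\alpha\in H^k(D_i;G)$ for a suitable index $i$, and for any $j>i$ the canonical homomorphism carrying $H^k(D_i;G)$ into the limit factors through the level-$j$ group via the composite bonding map. Here I would note that, because $H^k(-;G)$ is contravariant and $p_i^j=p_i^{i+1}\circ p_{i+1}^{i+2}\circ\cdots\circ p_{j-1}^j$, the homomorphism $H^k(p_i^j;G)$ appearing in the hypothesis is exactly the composite $H^k(p_{j-1}^j;G)\circ\cdots\circ H^k(p_i^{i+1};G)$ that pushes $H^k(D_i;G)$ forward to $H^k(D_j;G)$ inside the directed system.

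With this in hand the argument is immediate. Fix an arbitrary element of the direct limit and represent it by some $\alpha\in H^k(D_i;G)$. By hypothesis there is a $j>i$ with $H^k(p_i^j;G)$ the trivial homomorphism, so $H^k(p_i^j;G)(\alpha)=0$. Since the image of $\alpha$ in the limit coincides with the image of its pushforward $H^k(p_i^j;G)(\alpha)$, that image is $0$. As the chosen element was arbitrary, the direct limit is trivial, and therefore $\check H^k(D;G)=0$ by the identification above.

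I do not anticipate a genuine obstacle here: the only point demanding care is the bookkeeping in the previous paragraph, namely confirming that the composite homomorphism named in the hypothesis really is the transition map of the directed system from level $i$ to level $j$, so that annihilation of $\alpha$ at level $j$ forces its class in the limit to be zero. Once that is checked, the result is a formal consequence of Lemma \ref{contofChe} together with the definition of a direct limit.
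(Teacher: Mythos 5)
Your proof is correct and is exactly the argument the paper intends: the lemma is stated with no written proof (only a \qed), immediately following Lemma \ref{contofChe}, precisely because it is the formal consequence of continuity of \v Cech cohomology plus the observation that a direct limit in which every element is annihilated at some later stage is trivial. Your bookkeeping that $H^k(p_i^j;G)$ is the transition homomorphism of the directed system from level $i$ to level $j$ is the right (and only) point needing verification, and it holds by functoriality.
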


When we apply Lemma \ref{contCech}, there will be given
$n\in\N$, and we will face three cases.  The first, that $k>n$,
will be taken care of because it will be true in
the situations we cover that $\dim D\leq
n$, so $\check H^k(D;G)=0$ automatically.  The second will be
that $1\leq k<n$, and will be covered in Lemma
\ref{detectGacy1}. The case that $k=n$ will be managed using
Lemma \ref{detectGacy2}.  We note that for all
$1\leq k\leq n$, we are going to make an appeal only to homology
groups.  We lay the groundwork for this now.

\begin{lemma}\label{detectGacy1}Let $\mathbf{D}=(D_j,g_j^{j+1})$
be an inverse sequence of nonempty compact polyhedra,
$D=\lim\mathbf{D}$, and $k\in\N$.  Suppose that for all $j\in\N$,
\begin{enumerate}\item there exists $i>j$
such that $H_k(g_j^i;\Z):
H_k(D_i;\Z)\to H_k(D_j;\Z)$ is the trivial homomorphism, and\item
$H_{k-1}(D_j;\Z)$ is free-abelian.
\end{enumerate}
Then for any abelian group $G$, $\Check H^k(D;G)=0$.
\end{lemma}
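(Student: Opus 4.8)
The plan is to reduce the statement to Lemma~\ref{contCech} and then to control the induced cohomology homomorphisms by means of the Universal Coefficient Theorem. By Lemma~\ref{contCech}, it suffices to show that for each $i\in\N$ there exists some $j>i$ for which the induced homomorphism $H^k(p_i^j;G)\colon H^k(D_i;G)\to H^k(D_j;G)$ is trivial. So I would fix $i$ and invoke hypothesis (1) to select $j>i$ with $H_k(p_i^j;\Z)\colon H_k(D_j;\Z)\to H_k(D_i;\Z)$ the zero map. Everything then comes down to transporting this vanishing from homology with $\Z$-coefficients to cohomology with $G$-coefficients.

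The main tool is the naturality of the short exact sequence of the Universal Coefficient Theorem for cohomology. Since $\Z$ is a PID and the $D_m$ are compact polyhedra, for each index $m$ and each abelian group $G$ there is a natural short exact sequence
$$0\to \operatorname{Ext}(H_{k-1}(D_m;\Z),G)\to H^k(D_m;G)\to \operatorname{Hom}(H_k(D_m;\Z),G)\to 0.$$
Applying this at $D_i$ and at $D_j$ and using naturality with respect to $p_i^j$, I would write down the resulting commutative ladder, whose three vertical arrows are $\operatorname{Ext}(H_{k-1}(p_i^j;\Z),G)$, the map $H^k(p_i^j;G)$ in question, and $\operatorname{Hom}(H_k(p_i^j;\Z),G)$.

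The crucial observation is that hypothesis (2) holds at \emph{every} index, hence in particular at both $i$ and $j$. Thus $H_{k-1}(D_i;\Z)$ and $H_{k-1}(D_j;\Z)$ are free-abelian, so the corresponding $\operatorname{Ext}$ terms vanish and the horizontal extension maps degenerate to isomorphisms $H^k(D_m;G)\cong\operatorname{Hom}(H_k(D_m;\Z),G)$ for $m\in\{i,j\}$. Under these identifications the middle vertical arrow $H^k(p_i^j;G)$ is carried to $\operatorname{Hom}(H_k(p_i^j;\Z),G)$, that is, precomposition by $H_k(p_i^j;\Z)$. Because the latter map was chosen to be zero, $\operatorname{Hom}(H_k(p_i^j;\Z),G)=0$, and therefore $H^k(p_i^j;G)=0$. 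Feeding this back into Lemma~\ref{contCech} yields $\check H^k(D;G)=0$, as required.

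I anticipate that the only delicate point is to argue through the $\operatorname{Hom}$ and $\operatorname{Ext}$ functors directly, relying on naturality of the Universal Coefficient sequence rather than on its (non-natural) splitting. What makes the argument go through cleanly, and what I would flag as the key step, is that hypothesis (2) must be used not merely at $i$ but also at the later index $j$: this clears the $\operatorname{Ext}$ obstruction in the \emph{target} $H^k(D_j;G)$, which is precisely what allows the vanishing of $H_k(p_i^j;\Z)$ to propagate to the vanishing of $H^k(p_i^j;G)$.
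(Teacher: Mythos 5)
Your proposal is correct and follows essentially the same route as the paper's proof: both pass to Lemma~\ref{contCech}, use hypothesis (2) to kill the $\operatorname{Ext}$ terms in the Universal Coefficient sequence so that the Kronecker maps $H^k(D_m;G)\to\operatorname{Hom}(H_k(D_m;\Z),G)$ become isomorphisms at $m\in\{i,j\}$, and then observe that precomposition with the trivial map $H_k(p_i^j;\Z)$ is trivial. Your closing remark correctly identifies the essential point, namely that the $\operatorname{Ext}$ obstruction must be cleared at the target index $j$ so that the Kronecker map there is injective.
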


\begin{proof}Fix an abelian group $G$.
Let $j\in\N$, and choose $i>j$ as in (1) of
the hypothesis.  Using (2) and an application of Theorem 52.3(b),
page 318 of \cite{Mu}, one has that for each $s\in\{i,j\}$
$$\mathrm{Ext}(H_{k-1}(D_s;\Z),G)=0.$$
This and the short exact
sequence of Theorem 53.1 of \cite{Mu} (page 320, universal
coefficient theorem for cohomology) yield that the natural
Kronecker homomorphisms (see page 276 of \cite{Mu})
$\kappa_s:H^k(D_s;G)\to\mathrm{Hom}(H_k(D_s;\Z),G)$,
$s\in\{i,j\}$ are isomorphisms.

Let $$\lambda:
\mathrm{Hom}(H_k(D_j;\Z),G)\to\mathrm{Hom}(H_k(D_i;\Z),G)$$ be the homomorphism induced by
$H_k(g_j^i;\Z):H_k(D_i;\Z)\to H_k(D_j;\Z)$. Then we have
a commutative diagram of homomorphisms,
\begin{displaymath}
\xymatrix{
\mathrm{Hom} (H_k(D_j;\Z),G) \ar[rr]^{\lambda}  && \mathrm{Hom}(H_k(D_i;\Z),G) \\
H^k(D_j;G)\ar[u]^{\kappa_j} \ar[rr]_{H^k(g_j^i;G)} && H^k(D_i;G) \ar[u]^{\kappa_i}
}
\end{displaymath}

Let $u\in H^k(D_j;G)$.  We claim that $H^k(g_j^i;G)(u)=0$.
Since $\kappa_i$ is an isomorphism, then the preceding is true
if and only if $\kappa_i\circ H^k(g_j^i;G)(u)=0$. By the
diagram, this is true if and only if $\lambda\circ\kappa_j(u)
=0$. But $\kappa_j$ is also an isomorphism, so we only have to
show that $\lambda$ is the trivial homomorphism. Let
$f\in\mathrm{Hom}(H_k(D_j;\Z),G)$. Then $f:H_k(D_j;\Z)\to G$ is
a homomorphism, and $\lambda(f)=f\circ H_k(g_j^i;\Z)$.  The
second function in this composition is a trivial homomorphism
and hence $\lambda(f)$ is the trivial element of
$\mathrm{Hom}(H_k(D_i;\Z),G)$. Apply Lemma \ref{contCech} to complete this proof.
\end{proof}

\begin{corollary}\label{skiparound}Let $\mathbf{D}=(D_j,g_j^{j+1})$
be an inverse sequence of nonempty compact polyhedra,
$D=\lim\mathbf{D}$, and $k\in\N$.  Suppose that for all $j\in\N$,
\begin{enumerate}\item there exists $i>j$
such that $H_k(g_j^i;\Z): H_k(D_i;\Z)\to H_k(D_j;\Z)$ is
the trivial homomorphism, and\item there exist infinitely
many $i$ such that $H_{k-1}(D_i;\Z)$ is free-abelian.
\end{enumerate}
Then for any abelian group $G$, $\Check H^k(D;G)=0$.
\end{corollary}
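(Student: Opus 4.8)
The corollary relaxes condition (2) of Lemma~\ref{detectGacy1}: instead of requiring $H_{k-1}(D_i;\Z)$ to be free-abelian for \emph{all} $i$, we only assume there are \emph{infinitely many} $j$ with $H_{k-1}(D_j;\Z)$ free-abelian. The plan is to reduce the corollary to the lemma by passing to a subsequence. Since $\Check H^k(D;G)$ is an invariant of the inverse limit $D$, and the inverse limit is unchanged (up to homeomorphism) by passing to a cofinal subsequence, I can replace $\mathbf{D}$ by a subsequence in which \emph{every} term has free-abelian $(k-1)$st homology, and then apply Lemma~\ref{detectGacy1} directly.

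First I would use hypothesis (2) to extract an increasing sequence of indices $i_1<i_2<\cdots$ with each $H_{k-1}(D_{i_r};\Z)$ free-abelian; this is possible precisely because infinitely many such indices exist. I then form the subsequence $\mathbf{D}'=(D_{i_r},q_r^{r+1})$, where the bonding maps $q_r^{r+1}:D_{i_{r+1}}\to D_{i_r}$ are the appropriate compositions $p_{i_r}^{i_{r+1}}$ of the original bonding maps. A standard fact about inverse sequences is that the limit of a subsequence is canonically homeomorphic to the limit of the original sequence, so $\lim\mathbf{D}'\cong D$ and hence $\Check H^k(\lim\mathbf{D}';G)\cong\Check H^k(D;G)$.

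Next I would verify that $\mathbf{D}'$ satisfies both hypotheses of Lemma~\ref{detectGacy1}. Hypothesis (2) of the lemma holds for $\mathbf{D}'$ by construction, since every term $D_{i_r}$ was chosen to have free-abelian $(k-1)$st homology. For hypothesis (1), given any index $r$, I need some later index $s$ in the subsequence with $H_k$ of the composite bonding map from $D_{i_s}$ to $D_{i_r}$ trivial. Applying hypothesis (1) of the corollary to the original index $i_r$ yields some $j>i_r$ in the \emph{original} sequence for which $H_k(p_{i_r}^{j};\Z)$ is trivial; choosing any subsequence index $i_s\geq j$ and using functoriality of $H_k$ together with the fact that a composition through a trivially-inducing map is again trivial, I get that $H_k(q_r^s;\Z):H_k(D_{i_s};\Z)\to H_k(D_{i_r};\Z)$ is the trivial homomorphism.

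With both hypotheses verified for $\mathbf{D}'$, Lemma~\ref{detectGacy1} gives $\Check H^k(\lim\mathbf{D}';G)=0$ for every abelian group $G$, and the homeomorphism $\lim\mathbf{D}'\cong D$ then yields $\Check H^k(D;G)=0$, as required. The only point demanding any care is the functoriality argument in the previous paragraph: one must confirm that enlarging the upper index from $j$ to $i_s$ in the composite map preserves triviality of the induced map on $H_k$, which follows immediately since $H_k(q_r^s;\Z)=H_k(p_{i_r}^{j};\Z)\circ H_k(p_{j}^{i_s};\Z)$ factors through the trivial homomorphism $H_k(p_{i_r}^{j};\Z)$. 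I do not anticipate any genuine obstacle here; the corollary is essentially a cofinality remark packaged on top of Lemma~\ref{detectGacy1}.
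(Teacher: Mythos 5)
Your proposal is correct and is exactly the argument the paper intends: the corollary is stated without proof as an immediate consequence of Lemma \ref{detectGacy1}, obtained by passing to the cofinal subsequence of indices with free-abelian $(k-1)$st homology and noting that the limit (hence its \v Cech cohomology) is unchanged while hypothesis (1) persists by the factorization $H_k(q_r^s;\Z)=H_k(p_{i_r}^{j};\Z)\circ H_k(p_{j}^{i_s};\Z)$. No gaps.
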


\begin{lemma}\label{detectGacy2}Let $\mathbf{D}=(D_j,g_j^{j+1})$
be an inverse sequence of nonempty compact polyhedra,
$D=\lim\mathbf{D}$ and $n\in\N$.
Suppose that for all $j\in\N$,
\begin{enumerate}\item there exists $i>j$
such that $H_n(g_j^i;\Z/p):H_n(D_i;\Z/p)\to H_n(D_j;\Z/p)$ is the
trivial homomorphism,
\item $H_{n-1}(D_j;\Z)$ is free-abelian, and
\item $H_{n}(D_j;\Z)$ is free-abelian of finite rank.
\end{enumerate}
Then $\Check H^n(D;\Z/p)=0$.
\end{lemma}

\begin{proof}Let $j\in\N$, and choose $i>j$ as in (1) of the hypothesis.
Using (2) and an application of Theorem 54.4(c), page 328 of
\cite{Mu}, one sees that the torsion product of
$H_{n-1}(D_s,\Z)$ with $\Z/p$ equals $0$ for $s\in\{i,j\}$.  Hence
according to Theorem 55.1 of \cite{Mu} (universal coefficient
theorem for homology), there are natural isomorphisms
$\lambda_s:H_n(D_s;\Z)\otimes \Z/p\to H_n(D_s;Z/p)$, $s\in\{i,j\}$.

\begin{displaymath}
\xymatrix{
H_n(D_j;\Z)\otimes \Z/p \ar[d]_{\lambda_j} &&& H_n(D_i;\Z)\otimes \Z/p \ar[lll]_{H_n(g_j^i;\Z)\otimes id_{\Z/p}}
 \ar[d]_{\lambda_i} \\
H_n(D_j;\Z/p) &&& H_n(D_i;\Z/p) \ar[lll]^{H_n(g_j^i;\Z/p)}
}
\end{displaymath}

Since $H_n(g_j^i;\Z/p):H_n(D_i;\Z/p)\to H_n(D_j;\Z/p)$ is
the trivial homomorphism by (1), it follows that $H_n(g_j^i;\Z)\otimes id_{\Z/p}$
from the diagram above is also the trivial homomorphism.
Let us see what this means for the homomorphism $H_n(g_j^i;\Z):H_n(D_i;\Z)\to H_n(D_j;\Z)$. According to (3), the
groups $H_n(D_i;\Z)$ and  $H_n(D_j;\Z)$ are both (isomorphic to) finite direct sums of copies of $\Z$,
say $H_n(D_i;\Z) \cong \oplus_1^r \Z$ and $H_n(D_j;\Z) \cong \oplus_1^m \Z$, for
$r,m \in \N$. Let us take $e_1, \ldots , e_r$, and
$\overline e_1, \ldots , \overline e_m$ to be generators of
$H_n(D_i;\Z)$ and $H_n(D_j;\Z)$, respectively. Then, for each $q\in \{1, \ldots , r\}$, we have
\[H_n(g_j^i;\Z)(e_q)=(\alpha_1\cdot\overline e_1,\dots,
\alpha_m\cdot\overline e_m) \ \ \in H_n(D_j;\Z),\]
and since $H_n(g_j^i;\Z)\otimes id_{\Z/p}$ is trivial, this implies
that each of $\alpha_j$ is an integer divisible by $p$.

Now the commutative diagram of the proof of Lemma \ref{detectGacy1}
is valid for $G=\Z/p$, $k=n$, and the homomorphisms $\kappa_s$, $s\in\{i,j\}$ are
isomorphisms just as there because that only relies on (2).

\begin{displaymath}
\xymatrix{
\mathrm{Hom} (H_n(D_j;\Z),\Z/p) \ar[rr]^{\lambda}  && \mathrm{Hom}(H_n(D_i;\Z),\Z/p) \\
H^n(D_j;\Z/p)\ar[u]^{\kappa_j} \ar[rr]_{H^n(g_j^i;\Z/p)} && H^n(D_i;\Z/p) \ar[u]^{\kappa_i}
}
\end{displaymath}

Hence we may perform the steps analogous to those in the proof of Lemma
\ref{detectGacy1}, as follows.

Let $u\in H^n(D_j;\Z/p)$.  We claim that $H^n(g_j^i;\Z/p)(u)=0$.
To show this, as in the proof of Lemma \ref{detectGacy1} we only need to show that
$\lambda$ is the trivial homomorphism. Let
$f\in\mathrm{Hom}(H_n(D_j;\Z),\Z/p)$. Then $f:H_n(D_j;\Z)\to \Z/p$ is
a homomorphism, and $\lambda(f)=f\circ H_n(g_j^i;\Z)$.  But the properties of
$H_n(g_j^i;\Z)$ guarantee that, when followed by a homomorphism landing in $\Z/p$,
this yields the trivial homomorphism. Hence $\lambda(f)$ is the trivial element of
$\mathrm{Hom}(H_n(D_i;\Z),\Z/p)$. Apply Lemma \ref{contCech} to complete this proof.
\end{proof}

\begin{corollary}\label{anotherskip}Let $\mathbf{D}=(D_j,g_j^{j+1})$
be an inverse sequence of nonempty compact polyhedra,
$D=\lim\mathbf{D}$, and $n\in\N$. Suppose
that for all $j\in\N$,
\begin{enumerate}\item there exists $i>j$
such that $H_n(g_j^i;\Z/p):H_n(D_i;\Z/p)\to H_n(D_j;\Z/p)$ is the
trivial homomorphism, and there is an infinite subset $J\subset\N$ such that
for all $j\in J$,
\item  $H_{n-1}(D_j;\Z)$ is free-abelian, and
\item $H_n(D_j;\Z)$ is free-abelian of finite rank.
\end{enumerate}
Then $\Check H^n(D;\Z/p)=0$.
\end{corollary}

In \cite{Na} (see also Theorem 30.1 of \cite{IR}), it was proved that $\dim\leq n$ is
preserved in the limit of an inverse sequence of metrizable spaces.  Let us collect
this fact and some others that we need about limits of inverse sequences of metrizable compacta.

\begin{theorem}\label{presdim}Let $\mathbf{T}=(T_j,g_j^{j+1})$ be an inverse
sequence of metrizable compacta and $Z=\lim\mathbf{T}$.  Then,
\begin{enumerate}

\item $Z$ is a metrizable compactum,

\item if for all $j$, $T_j$ is connected, $Z$ is connected,

\item if for all $j$, $T_j\neq\emptyset$, $Z\neq\emptyset$, and

\item  if $n\geq0$ and for all $j$, $\dim T_j\leq n$, $\dim Z\leq n$.
\end{enumerate}
\end{theorem}

\section[Simplicial Complexes, Vertex Stars]
{Simplicial Complexes, Vertex Stars}\label{stars}

For each finite simplicial complex $S$, we are going to write $|S|$ to denote its
polyhedron with the weak topology which in this case is compact and metrizable.
We shall be employing subdivisions and simplicial approximations, so let us
note a couple of important facts that can be found in Chapter 3 of \cite{Sp}.

\begin{lemma}\label{sipapprox}Let $S$ be a finite simplicial complex.  Then,
\begin{enumerate}\item if $R$ is a subdivision of $S$, then there is a simplicial
approximation $\varphi:|R|\to|S|$ of $\id_{|S|}$, and
\item if $P$ is a compact polyhedron and $f:P\to|S|$ is a map, then there is a
triangulation $T$ of $P$ such that for every subdivision $L$ of $T$, there is
a simplicial approximation $g:|L|\to|S|$ of the map $f$.
\end{enumerate}
\end{lemma}

\begin{lemma}\label{ktok+1}Let $S$ be a finite simplicial complex such that $|S|$ is
contractible.  Then for each $k\in\N$, the inclusion $|S^{(k)}|\hookrightarrow|S^{(k+1)}|$
is homotopic in $|S^{(k+1)}|$ to a constant map.
\end{lemma}

\begin{proof}Each contracting homotopy of $|S^{(k)}|\hookrightarrow|S|$ can be replaced by
a $\PL$-homotopy $H:|S^{(k)}|\times[0,1]\to|S^{(k+1)}|$ where $H_0$ equals the inclusion
$|S^{(k)}|\hookrightarrow|S^{(k+1)}|$ and $H_1$ is a constant map.
\end{proof}

Next is an elementary fact from homology theory.

\begin{lemma}\label{elemfact}Let $S$ be a finite simplicial complex, $k\in\N$,
and $G$ an abelian group.  Then the inclusion map $|S^{(k+1)}|\hookrightarrow|S|$
induces an isomorphism of $H_k(|S^{(k+1)}|;G)$ onto $H_k(|S|;G)$.\qed
\end{lemma}

\begin{lemma}\label{contractsub}Let $T$ be a finite simplicial
complex and $S$ a subcomplex of $T$ such that $|S|$ is contractible.  Then for
each $n\in\N$, $1\leq k<n$, and abelian group $G$, $H_k(|S^{(n)}|;G)=0$.
\end{lemma}

\begin{proof}Since $|S|$ is contractible, then $H_k(|S|;G)=0$. By Lemma \ref{elemfact},
$k\in\N$ implies that the inclusion $|S^{(k+1)}|\hookrightarrow|S|$ induces an isomorphism of
$H_k(|S^{(k+1)}|;G)$ onto $H_k(|S|;G)$. This shows that $H_k(|S^{(k+1)}|;G)=0$.
On the other hand, since $1\leq k<n$, Lemma \ref{elemfact} shows that the inclusion
$|S^{(k+1)}|\hookrightarrow|S^{(n)}|$
induces an isomorphism of $H_k(|S^{(k+1)}|;G)$ onto $H_k(|S^{(n)}|;G)$, so the latter equals $0$.
\end{proof}

Let $T$ be a finite simplicial complex and $v\in T^{(0)}$.
Then $\st(v,T)$ will denote the {\it open star} of $v$ in $T$
and $\overline\st(v,T)$ will denote the {\it closed star}
of $v$ in $T$.  Of course, $\st(v,T)$ is an open neighborhood of
$v$ in the compact polyhedron $|T|$, $\overline\st(v,T)$ is
a closed subset of the compact polyhedron $|T|$, and
$\st(v,T)\subset\overline\st(v,T)$.  Moreover,
each of $\st(v,T)$ and $\overline\st(v,T)$ is contractible,
and there is a unique subcomplex $S$ of $T$ such that $|S|=
\overline\st(v,T)$.  Lemma \ref{contractsub} leads to the next fact about closed stars.

\begin{corollary}\label{contrachomol}Let $T$ be a finite simplicial
complex, $v\in T^{(0)}$, and $S$ the unique subcomplex of $T$ that triangulates
$\overline\st(v,T)$.  Suppose that $T_0$ is a subdivision of $T$.  Then,
\begin{enumerate}\item there is a unique subcomplex $S_0$ of $T_0$ with $|S_0|=|S|$, and
\item for each $n\in\N$, $1\leq k<n$, and abelian group $G$,
$H_k(|S_0^{(n)}|;G)=0$.
\end{enumerate}
\end{corollary}

As usual, when $G$ is an abelian group, $n\geq0$, and $T$ is a simplicial
complex, then $Z_n(T;G)$ will denote the set of (simplicial)
$G$-cycles among the $G$-chains.\footnote{Some
orientation on $T$ is assumed.}

\begin{definition}\label{boundarycyc}Whenever $\sigma$ is an
$(n+1)$-simplex of a simplicial complex $T$, then by
$\partial\sigma$ we shall mean the $n$-dimensional simplicial $\Z$-cycle of
$T^{(n)}$, i.e., $\partial\sigma\in Z_n(T^{(n)};\Z)$.\footnote{Later we are going
to use $\partial\sigma$ to denote the combinatorial boundary of
$\sigma$ (its boundary as a manifold); the reader will have no difficulty distinguishing between
these two uses of the same notation.}  Subsequently, however, when we deal with
homomorphisms induced by maps, we shall also treat $\partial\sigma$ as a singular $\Z$-cycle.
\end{definition}

\begin{lemma}\label{knockout}Let $E$ be a metrizable compactum, $p\in\N_{\geq2}$, $n\in\N$,
and $g\in H_n(E;\Z/p)$.  Then for each $r\in\N$ having the property that $p$ divides $r$, we have
$r\cdot g=0\in H_n(E;\Z/p)$.\qed
\end{lemma}

\begin{definition}\label{starstruck2}Let $T$ be a finite simplicial
complex, $v$ a vertex of $T$, and $n\in\N$. Denote by $S$ the
subcomplex of $T$ that triangulates $\overline\st(v,T)$, by $\mathcal{E}_{v,T}$ the set of
$(n+1)$-simplexes of $T$ having $v$ as a vertex, and by
$m_{v,T}$ the cardinality of $\mathcal{E}_{v,T}$.
\end{definition}

\begin{lemma}\label{isfreeab}Taking the notation from
$\mathrm{Definition\,\,\ref{starstruck2}}$, one has that
the group $H_n(|S^{(n)}|;\Z)$ is free abelian of rank $m_{v,T}$.
Moreover, if $G$ is an abelian group, $m_{v,T}\geq1$, and we
list $\mathcal{E}_{v,T}=\{\sigma_i\,|\,1\leq i\leq m_{v,T}\}$,
then:
\begin{enumerate}
\item for each $g\in G$ and $1\leq i\leq m_{v,T}$,
$g\cdot\partial\sigma_i$ lies in $Z_n(|S^{(n)}|;G)$,
\item
for each $z\in Z_n(|S^{(n)}|;G)$, there exists a set
$\{g_i\,|\,1\leq i\leq m_{v,T}\}\subset G$ such that
$z=\sum_{i=1}^{m_{v,T}}g_i\cdot\partial\sigma_i$,
\item if
$p\in\N_{\geq2}$, $h\in Z_n(|S^{(n)}|;\Z/p)$, and $r\in\Z$ is
a multiple of $p$, then $r\cdot h$ is an $n$-dimensional
boundary $\Z/p$-cycle, i.e., it is homologous to $0$ in
$Z_n(|S^{(n)}|;\Z/p)$, and\item if $p\in\N_{\geq2}$, $h\in
H_n(|S^{(n)}|;\Z/p)$, and $r\in\N$ is a multiple of $p$, then
$r\cdot h=0$. \qed\end{enumerate}
\end{lemma}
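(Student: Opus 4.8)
The plan is to exploit the cone structure of the closed star. Writing $L$ for the link $\operatorname{lk}(v,T)$ of $v$ in $T$, the subcomplex $S$ that triangulates $\overline\st(v,T)$ is the simplicial cone $v\ast L$, so every simplex of $S$ either lies in $L$ or has the form $v\ast\tau$ for a simplex $\tau$ of $L$. In particular the members of $\mathcal{E}_{v,T}$, namely the $(n+1)$-simplexes of $S$ containing $v$, are exactly the joins $\sigma_i=v\ast\rho_i$ as $\rho_i$ ranges over the $n$-simplexes of $L$, so $m_{v,T}$ equals the number of $n$-simplexes of $L$. I would record the cone operator $D_v$ sending a simplex $\tau$ to $v\ast\tau$, which satisfies $\partial D_v(\tau)=\tau-D_v(\partial\tau)$ whenever $\dim\tau\geq1$, together with the direct-sum decomposition $C_n(S;G)=C_n(L;G)\oplus D_v(C_{n-1}(L;G))$ coming from whether or not a simplex contains $v$.

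For (1), since $D=|S^{(n)}|$ carries no simplex above dimension $n$, there are no $(n+1)$-chains, so $H_n(D;\Z)=Z_n(S^{(n)};\Z)$; as a subgroup of the free abelian group $C_n(S^{(n)};\Z)$ it is automatically free abelian, and it remains to exhibit $\{\partial\sigma_i\}$ as a basis. Linear independence is immediate from $\partial\sigma_i=\rho_i-D_v(\partial\rho_i)$: the term $\rho_i$ is an $n$-simplex omitting $v$ while every simplex of $D_v(\partial\rho_i)$ contains $v$, so in the simplex basis the coefficient of $\rho_j$ in $\partial\sigma_i$ is $\delta_{ij}$. For spanning, which simultaneously proves (3) over an arbitrary $G$, I take $z\in Z_n(S^{(n)};G)$ and use the decomposition to write $z=z_0+D_v(c)$ with $z_0\in C_n(L;G)$ and $c\in C_{n-1}(L;G)$; applying $\partial$, the cone formula, and the independence of the $v$-containing and $v$-omitting summands then forces $c=-\partial z_0$. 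Writing $z_0=\sum_i g_i\rho_i$ over the $n$-simplexes of $L$ yields $z=z_0-D_v(\partial z_0)=\sum_i g_i(\rho_i-D_v(\partial\rho_i))=\sum_i g_i\partial\sigma_i$, which is (3); taking $G=\Z$ completes (1) with rank $m_{v,T}$.

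Claim (2) is then immediate, since $\partial(g\cdot\partial\sigma_i)=g\cdot\partial\partial\sigma_i=0$ places $g\cdot\partial\sigma_i$ in $Z_n(S^{(n)};G)$. For (4) and (5) the content is that $Z_n(S^{(n)};\Z/p)=H_n(S^{(n)};\Z/p)$, again because $B_n(S^{(n)})=0$, is a $\Z/p$-module: by (3) any $h$ equals $\sum_i\bar g_i\,\partial\sigma_i$ with $\bar g_i\in\Z/p$, so for $r$ a multiple of $p$ one has $r\cdot h=\sum_i(r\bar g_i)\partial\sigma_i=0$, whence $r\cdot h$ is (trivially) homologous to $0$.

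The main obstacle is the spanning step of (1) and (3): one must check that the decomposition $z=z_0+D_v(c)$ together with $\partial z=0$ genuinely pins down $c=-\partial z_0$, which requires separating the $v$-containing and $v$-omitting parts of $\partial z$ and treating the low-dimensional endpoint of the cone formula (the case $n=1$, where $\partial D_v$ on $0$-chains carries the augmentation term $-\varepsilon(c)\,v$) so that the identity $z=\sum_i g_i\,\partial\sigma_i$ holds on the nose. Everything else is routine bookkeeping with the cone operator.
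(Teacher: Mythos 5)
The paper asserts Lemma \ref{isfreeab} without proof (it ends in a tombstone), so there is no argument of record to compare against; your proposal supplies a correct one, and it is the computation the authors are implicitly relying on. Identifying $S$ with the simplicial cone $v\ast\operatorname{lk}(v,T)$, so that the $(n+1)$-simplexes $\sigma_i\in\mathcal{E}_{v,T}$ correspond to the $n$-simplexes $\rho_i$ of the link, and using the cone operator with $\partial D_v(\tau)=\tau-D_v(\partial\tau)$, does everything at once: since $S^{(n)}$ carries no $(n+1)$-chains, $H_n(D;\Z)=Z_n(S^{(n)};\Z)$; linear independence of $\{\partial\sigma_i\}$ follows because $\rho_j$ occurs in $\partial\sigma_i$ with coefficient $\pm\delta_{ij}$ while all other faces contain $v$; and your reduction of an arbitrary cycle to $z=z_0-D_v(\partial z_0)=\sum_i g_i\,\partial\sigma_i$ proves the spanning statement over any coefficient group, which is item (2) of the lemma and yields the rank claim for $G=\Z$. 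Items (1), (3) and (4) then follow as you say, (3) and (4) really only needing that $Z_n(S^{(n)};\Z/p)=H_n(S^{(n)};\Z/p)$ is a $\Z/p$-module. Two minor remarks: your internal numbering is shifted relative to the lemma (your ``(3)'' is the lemma's (2), your ``(2)'' is its (1), and your ``(4) and (5)'' are its (3) and (4)), though every assertion is in fact covered; and you rightly isolate the $n=1$ augmentation term $-\varepsilon(c)\,v$ in the cone formula, which is the only point where the bookkeeping needs care and which causes no trouble since $v\notin|L|$ forces $\varepsilon(c)=0$ there.
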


\begin{lemma}\label{zeroout}Let $T$ and $S$ be finite simplicial complexes, $n\in\N$, and
$f:|T|\to|S|$ a map.  Suppose that $\sigma$ is an $(n+1)$-simplex of $T$.
\begin{enumerate}\item If $\rho\in S^{(n)}$ and $f(\partial\sigma)\subset\rho$,
then $f:\partial\sigma\to|S|$ is homotopic to a constant map, so
for any abelian group $G$ and $g\in G$, $Z_n(f;G)(g\cdot\partial\sigma)$ is homologous
to $0$ in $Z_n(|S|;G)$.
\item Suppose there is an $(n+1)$-simplex $\rho\in S$ such that $f(\partial\sigma)\subset\partial\rho$.
If $p\in\N_{\geq2}$ and $f:\partial\sigma\to\partial\rho$ has degree
a multiple of $p$, then for any $g\in\Z/p$, $Z_n(f;\Z/p)(g\cdot\partial\sigma)$ is homologous
to $0$ in $Z_n(|S|;\Z/p)$.
If $f:\partial\sigma\to\partial\rho$ has degree $0$, i.e., is null homotopic, then
for any abelian group $G$ and $g\in G$, $Z_n(f;G)(g\cdot\partial\sigma)$ is homologous
to $0$ in $Z_n(|S|;G)$.  \qed
\end{enumerate}
\end{lemma}

\section[Fundamental Extensions]
{Fundamental Extensions}\label{fundext}

Definition \ref{allstickons} provides the blueprint for the technique we are going to
use to form the extensions that we shall need in our proof of the resolution theorems.

\begin{definition}\label{allstickons}Let $n\in\N$ and $(K,\Sigma^n)$ be a pair such that $K$ is a $\CW$-complex
and $\Sigma^n\subset K$ is an embedded copy of $S^n$.  For each finite simplicial complex $L$
and each $\sigma\in L$ with $\dim\sigma=n+1$,
we attach $K$ to $|L|$ along $\partial\sigma$ via a homeomorphism
$h_\sigma$ of $\Sigma^n\subset K$ to $\partial\sigma$, and we denote the attached copy $K_\sigma$.
The $\CW$-complex so obtained will be denoted $F_{L,K,\Sigma^n}$,
that is, $$F_{L,K,\Sigma^n}=|L|\cup\bigcup\{K_\sigma\,|\,(\sigma\in L)\wedge(\dim\sigma=n+1)\}.$$

It is of course understood that for each such $\sigma$, $K_\sigma\cap|L|=\partial\sigma$
and that if $\tau\in L$ with $\dim\tau=n+1$, then $K_\sigma\cap K_\tau=\partial\sigma\cap\partial\tau$.
\end{definition}

For each $\CW$-complex $K$ and $n\geq0$,
$K^{(n)}$ will denote the $n$-skeleton of $K$.

\begin{lemma}\label{itsnplusone}For each $(n,L,K,\Sigma^n)$ as in
$\mathrm{Definition\,\ref{allstickons}}$, it is true that $F_{L,K,\Sigma^n}^{(n+1)}=
|L^{(n+1)}|\cup\bigcup\{K_\sigma^{(n+1)}
\,|\,(\sigma\in L)\wedge(\dim\sigma=n+1)\}$.\qed
\end{lemma}

\begin{definition}\label{corepart}For each $(n,L,K,\Sigma^n)$ as in
$\mathrm{Definition\,\ref{allstickons}}$, denote,
$$F_{L,K,\Sigma^n}^*=|L^{(n)}|\cup\bigcup\{K_\sigma^{(n+1)}\,|\,(\sigma\in L)\wedge(\dim\sigma=n+1)\}.$$
\end{definition}

\begin{lemma}\label{littlethicker}For each $(n,L,K,\Sigma^n)$ as in
$\mathrm{Definition\,\ref{allstickons}}$,  $$F_{L,K,\Sigma^n}^*\subset F_{L,K,\Sigma^n}^{(n+1)}.\,\,\qed$$
\end{lemma}

For each $(n,L,K,\Sigma^n)$ as in Definition \ref{allstickons}, there exists a retraction $r:F_{L,K,\Sigma^n}\to|L|$
such that $r(K_\sigma\setminus\partial\sigma)\subset\inter\sigma$ whenever
$\sigma\in L$ and $\dim\sigma=n+1$. Let us make this formal.

\begin{definition}\label{pushdown}For each $(n,L,K,\Sigma^n)$ as in
$\mathrm{Definition\,\ref{allstickons}}$, fix a retraction $r_{L,K,\Sigma^n}:F_{L,K,\Sigma^n}\to|L|$
such that $r_{L,K,\Sigma^n}(K_\sigma\setminus\partial\sigma)
\subset\inter\sigma$ whenever $\sigma\in L$ and $\dim\sigma=n+1$.
\end{definition}

\begin{lemma}\label{meetinboundary}For each $(n,L,K,\Sigma^n)$ as in
$\mathrm{Definition\,\ref{allstickons}}$, the map $r_{L,K,\Sigma^n}:F_{L,K,\Sigma^n}\to|L|$
has the following properties:
\begin{enumerate}\item  $r_{L,K,\Sigma^n}(F_{L,K,\Sigma^n}^*)\subset|L^{(n+1)}|$,
\item $r_{L,K,\Sigma^n}(x)=x$ for all $x\in|L^{(n)}|$,
\item if $\sigma\in L$ with $\dim\sigma=n+1$, then $r_{L,K,\Sigma^n}(K_\sigma)\subset\sigma$, and
$r_{L,K,\Sigma^n}(K_\sigma\setminus\partial\sigma)\subset\inter\sigma$, and
\item the restriction of $r_{L,K,\Sigma^n}$ to $F_{L,K,\Sigma^n}^{(n+1)}$ is a retraction of $F_{L,K,\Sigma^n}^{(n+1)}$ to
its subspace $|L^{(n+1)}|$.\qed
\end{enumerate}
\end{lemma}

Let $n\in\N$ and $L$ be a finite simplicial complex. There exists a
subdivision $S$ of $L$ having the property that for each $\sigma\in L$
with $\dim\sigma=n+1$, $|N(\partial\sigma,S)|$ is a regular neighborhood of $\partial\sigma$,
where of course $N(\partial\sigma,S)$ denotes the simplicial
neighborhood of $\partial\sigma$ in $S$.  For example, we may choose $S$ to be
the second barycentric subdivision of $L$.
\begin{lemma}\label{anANR}Let $n\in\N$, $L$ a finite simplicial complex,
and $S$ a subdivision of $L$ having the property that for each $\sigma\in L$
with $\dim\sigma=n+1$, $|N(\partial\sigma,S)|$ is a regular neighborhood of $\partial\sigma$.
Then for each $\sigma\in L$ with $\dim\sigma=n+1$,
\begin{enumerate}\item$\partial\sigma\subset|N(\partial\sigma,S)|$,
\item $\partial\sigma$ is a strong deformation retract of $|N(\partial\sigma,S)|$, and
\item $|N(\partial\sigma,S)|$ is a compact $\ANR$.\qed
\end{enumerate}
\end{lemma}

\begin{definition}\label{aregnbhd}Let $n\in\N$ and $L$ be a finite simplicial complex.
Let $S$ be a subdivision of $L$ having the property that for each $\sigma\in L$
with $\dim\sigma=n+1$, $|N(\partial\sigma,S)|$ is a regular neighborhood of $\partial\sigma$.
We shall refer to such an $S$ as an $n$-{\bf regular subdivision} of $L$.
\end{definition}

\begin{definition}\label{addskirts}Let $(n,L,K,\Sigma^n)$ be as in
$\mathrm{Definition\,\ref{allstickons}}$. Suppose that $S$ is an $n$-regular subdivision of $L$.
For each $\sigma\in L$ with $\dim\sigma=n+1$, let $$K_{\sigma,+}=K_\sigma\cup|N(\partial\sigma,S)|.$$
\end{definition}

Taking into account Lemma \ref{anANR}(2,3) and Definition \ref{addskirts}, one obtains the next fact.

\begin{lemma}\label{wearaskirt}Let $(n,L,K,\Sigma^n)$ be as in $\mathrm{Definition\,\ref{allstickons}}$.
Suppose that $S$ is an $n$-regular subdivision of $L$. Then for each $\sigma\in L$ with $\dim\sigma=n+1$,
\begin{enumerate}\item$K_{\sigma,+}$ is an absolute neighborhood extensor for compact metrizable spaces,
\item$K_\sigma$ is a strong deformation retract of $K_{\sigma,+}$, and
\item whenever $Y$ is a metrizable compactum with $Y\tau K$, then $Y\tau K_{\sigma,+}$.\qed
\end{enumerate}
\end{lemma}

\section[Lemma for an Extension]
{Lemma for an Extension}\label{lemforext}

Lemma \ref{Edwardstype} is critical for our proof of Proposition \ref{pushandshove2}.  It is one
of the key ingredients in our technique for avoiding complicated extensions, that is, to staying
away from Edwards type lemmas as we mentioned in the Introduction.

\begin{lemma}\label{Edwardstype}Let $(n,L,K,\Sigma^n)$ be as in $\mathrm{Definition\,\ref{allstickons}}$.
Suppose that $E$ is a compact polyhedron, $f:E\to|L|$ a map, $S$ an $n$-regular subdivision of $L$,
and that for each $\sigma\in L$ with $\dim\sigma=n+1$, the map $f|f\inv(|N(\partial\sigma,S)|):f\inv(|N(\partial\sigma,S)|)\to
|N(\partial\sigma,S)|\subset K_{\sigma,+}$ extends to a map of $E$ to $K_{\sigma,+}$. Let $N$ be a
triangulation of $E$ that admits a simplicial approximation $f_1:|N|\to|L|$ of the map $f$.
Then there exists a map $h:|N^{(n+1)}|\to F_{L,K,\Sigma^n}^*$ $(\mathrm{Definition\,\ref{corepart}})$ such that,
\begin{enumerate}\item$h(x)=f_1(x)$ for all $x\in f_1\inv(|L^{(n)}|)\cap|N^{(n+1)}|$, and
\item for each $\sigma\in L$ with $\dim\sigma=n+1$, $h(f_1\inv(\sigma)\cap|N^{(n+1)}|)\subset K_\sigma$.
\end{enumerate}
\end{lemma}

\begin{proof}Let $M$ be a subdivision of $N$ that admits a map $f_2:|M|\to|S|$ such that
$f_2$ is simultaneously a simplicial approximation of $f$ and $f_1$.  Temporarily fix
$\sigma\in L$ with $\dim\sigma=n+1$.  We claim that:

$(\dag_1)$  $f_2(f_1\inv(\partial\sigma))\subset\partial\sigma$,

$(\dag_2)$  $f_1\inv(\partial\sigma)\subset f_2\inv(\partial\sigma)$, and

$(\dag_3)$ $f_2|f_1\inv(\partial\sigma):f_1\inv(\partial\sigma)\to\partial\sigma$ is homotopic
to $f_1|f_1\inv(\partial\sigma):f_1\inv(\partial\sigma)\to\partial\sigma$.

Plainly $(\dag_2)$ follows from $(\dag_1)$; let us prove $(\dag_1)$.
Let $x\in f_1\inv(\partial\sigma)$; then $f_1(x)\in\partial\sigma$ which implies that there
exists $\tau\in S$ such that $f_1(x)\in\inter\tau\subset\tau\subset\partial\sigma$.  Since $f_2:|M|\to|S|$
is a simplicial approximation of $f_1$, then $f_2(x)\in\tau\subset\partial\sigma$ as needed
for $(\dag_1)$.  Since $\{f_1(x),f_2(x)\}\subset\tau\subset\partial\sigma$, then $f_1$ and $f_2$
are ``straight-line'' homotopic as maps to $\partial\sigma$, so $(\dag_3)$ is also established.

Now let us prove some parallel facts:

$(\dag_4)$  $f(f_2\inv(\partial\sigma))\subset|N(\partial\sigma,S)|$,

$(\dag_5)$  $f_2\inv(\partial\sigma)\subset f\inv(|N(\partial\sigma,S)|$, and

$(\dag_6)$  $f|f_2\inv(\partial\sigma):f_2\inv(\partial\sigma)\to|N(\partial\sigma,S)|$ is homotopic
to $f_2|f_2\inv(\partial\sigma):f_2\inv(\partial\sigma)\to\partial\sigma\subset|N(\partial\sigma,S)|$.

Plainly $(\dag_5)$ follows from $(\dag_4)$. We proceed to prove $(\dag_4)$.
Let $x\in f_2\inv(\partial\sigma)$; there exists $\tau\in S$ with $f(x)\in\inter\tau$.  Since
$f_2:|M|\to|S|$ is a simplicial approximation of $f$ and $\tau\in S$, then $f_2(x)\in\tau$.
But $f_2(x)\in\partial\sigma\cap\tau$ which implies that $\tau\subset|N(\partial\sigma,S)|$.
Hence $f(x)\in\inter\tau\subset\tau\subset|N(\partial\sigma,S)|$,
which gives us $(\dag_4)$.  Since $\{f(x),f_2(x)\}\subset\tau\subset|N(\partial\sigma,S)|$, then $f$
and $f_2$ are ``straight-line'' homotopic as maps to $|N(\partial\sigma,S)|$, so $(\dag_6)$ is also established.

By hypothesis, the map $$f|f\inv(|N(\partial\sigma,S)|):f\inv(|N(\partial\sigma,S)|)\to
|N(\partial\sigma,S)|\subset K_{\sigma,+}$$ extends to a map of $E$ to $K_{\sigma,+}$.
This, alongside $(\dag_5)$ and $(\dag_6)$, shows that,

$(\dag_7)$ $f_2|f_2\inv(\partial\sigma):f_2\inv(\partial\sigma)\to\partial\sigma\subset|N(\partial\sigma,S)|$
extends to a map of $E$ to $K_{\sigma,+}$.

Taking into consideration $(\dag_7)$, $(\dag_2)$, and $(\dag_3)$, one concludes that:

$(\dag_8)$ $f_1|f_1\inv(\partial\sigma):f_1\inv(\partial\sigma)\to\partial\sigma\subset|N(\partial\sigma,S)|$
extends to a map of $E$ to $K_{\sigma,+}$.

Making use of $(\dag_8)$ and the strong deformation retraction guaranteed us by Lemma \ref{anANR}(2),
we have,

$(\dag_9)$ $f_1|f_1\inv(\partial\sigma):f_1\inv(\partial\sigma)\to\partial\sigma\subset|N(\partial\sigma,S)|$
extends to a map $f_{1,\sigma}:E\to K_\sigma$.

Having accomplished the preceding for each fixed $\sigma$, now notice that
$|L^{(n+1)}|=|L^{(n)}|\cup\bigcup\{\sigma\,|\,(\sigma\in L)\wedge(\dim\sigma=n+1)\}$.
Of course, $f_1(|N^{(n+1)}|)\subset|L^{(n+1)}|$.  Hence,

$(\dag_{10})$  $|N^{(n+1)}|=\big(f_1\inv(|L^{(n)}|)\cup\bigcup\{f_1\inv(\sigma)\,|\,
(\sigma\in L)\wedge(\dim\sigma=n+1)\}\big)\cap|N^{(n+1)}|$.

There exists a subcomplex $N_0$ of $N^{(n+1)}$ such that $|N_0|=f_1\inv(|L^{(n)}|)\cap|N^{(n+1)}|$.
Moreover, for each $\sigma\in L$ with $\dim\sigma=n+1$, there is also a subcomplex $N_\sigma$ of
$N^{(n+1)}$ such that $|N_\sigma|=f_1\inv(\sigma)\cap|N^{(n+1)}|$.
Using this and $(\dag_{10})$ we see that,

$(\dag_{11})$  $|N^{(n+1)}|=|N_0|\cup\bigcup\{|N_\sigma|\,|\,
(\sigma\in L)\wedge(\dim\sigma=n+1)\}$.

Taking into account $(\dag_9)$, for each $\sigma\in L$ with $\dim\sigma=n+1$, we put

$(\dag_{12})$ $h_\sigma=f_{1,\sigma}\big||N_\sigma|:|N_\sigma|\to K_\sigma$.

As an application of $(\dag_9)$, we find that if $\sigma\in L$ and $\dim\sigma=n+1$, then

$(\dag_{13})$ for each $x\in f_1\inv(\partial\sigma)\cap|N^{(n+1)}|$, $f_{1,\sigma}(x)=f_1(x)$.

If $\tau\in L$ and $\dim\tau=n+1$, then

$(\dag_{14})$  $f_1\inv(\partial\sigma)\cap f_1\inv(\partial\tau)\cap
|N^{(n+1)}|=f_1\inv(\partial\sigma\cap\partial\tau)\cap
|N^{(n+1)}|\subset f_1\inv(|L^{(n)}|)\break\cap|N^{(n+1)}|=|N_0|$.

Let,

$(\dag_{15})$ $h_0=f_1\big||N_0|:|N_0|\to|L^{(n)}|$.

Making use of $(\dag_{11})$--$(\dag_{15})$ and $(\dag_9)$, we see that, $$h=h_0\cup
\bigcup\{h_\sigma\,|\,(\sigma\in L)\wedge(\dim\sigma=n+1)\}$$
is a map of $|N^{(n+1)}|$ to $F_{L,K,\Sigma^n}^*$ satisfying $(1)$ and $(2)$.  This completes our proof.
\end{proof}

\section[Eilenberg-MacLane Complexes for $\Z$, $\Z/p$, $\Q$]
{Eilenberg-MacLane Complexes for $\Z$, $\Z/p$, $\Q$}\label{EMacs}

When $X$ is a space, we shall write $X\tau K$ to mean that $X$
is an absolute co-extensor for $K$.  This simply means that if $A$ is a closed subset of $X$,
and $f:A\to K$ is a map, then there is a map $g:X\to K$ that extends $f$.  We assume that the
reader is familiar with the concept of an Eilenberg-MacLane $\CW$-complex $K$ of type $K(G,n)$; such
complexes exist for each abelian group $G$, and for a given abelian group $G$ any $\CW$-complexes $K_1$, $K_2$ of
type $K(G,n)$ are homotopy equivalent.  It is well-known that if $X$ is a metrizable compactum and $K_1$, $K_2$ are
Eilenberg-MacLane $\CW$-complexes of type $K(G,n)$, then $X\tau K_1$ if and only if $X\tau K_2$.
According to Theorem 1.1 of \cite{Dr},

\begin{lemma}\label{equivforcohdim} For each $n\in\N$,
compact metrizable space $X$, and Eilenberg-MacLane $\CW$-complex $K$ of type $K(G,n)$, $\dim_G X\leq n$ if and only
if $X\tau K$.
\end{lemma}

When we encounter a space $X$ which is homeomorphic to $S^n$, then we shall
assume that there is a fixed isomorphism between $\pi_n(X)$ and $\Z$.
In this way if $Y$ is also homeomorphic to $S^n$ and $f:X\to Y$ is a map,
then the degree of $f$, denoted $\deg(f)$, is well-defined.

In what follows, we will use standard constructions of the $\CW$-complexes $K(\Z,n)$, $K(\Z/p,n)$,
and $K(\Q,n)$ for $n\in\N$ and $p\geq2$. For $K(\Z,n)$ and $K(\Q,n)$ we shall require that $n\geq2$, but
for $K(\Z/p,n)$, $n\geq1$ will be permitted.  These restrictions on $n$ will typically be implicit in the sequel.
Each of $K(\Z,n)$ and $K(\Z/p,n)$ respectively contains a unique ``canonical'' copy $\Sigma_{\Z}=K(\Z,n)^{(n)}$ and
$\Sigma_{\Z/p}=K(\Z/p,n)^{(n)}$ of $S^n$. In this setting,
$K(\Z,n)^{(n+1)}=K(\Z,n)^{(n)}=\Sigma_{\Z}$; moreover
$K(\Z/p,n)^{(n+1)}$ is a standard Moore space of type $(\Z/p,n)$, i.e., an $(n+1)$-cell
$B^{n+1}$ attached to $\Sigma_{\Z/p}$ via a map of $\partial(B^{n+1})$ of degree $p$.
We shall denote $\kappa_{\Z}:\Sigma_{\Z}\hookrightarrow K(\Z,n)$ and
$\kappa_{\Z/p}:\Sigma_{\Z/p}\hookrightarrow K(\Z/p,n)$.

$(*)$  In case $f:S^n\to\Sigma_{\Z/p}$
is a map and $\pi_n(\kappa_{\Z/p})\circ\pi_n(f):\pi_n(S^n)\to\pi_n(K(\Z/p,n))$ is the trivial
map, then $\deg(f)$ is a multiple of $p$.

The situation with $K(\Q,n)$ is different because $K(\Q,n)^{(n)}$ is a countable wedge, say
$\bigvee\{S_i^n\,|\,i\in\N\}$ of copies $S_i^n$ of $S^n$. So there is not a canonically unique choice of
$S^n$ as we had above for the groups $\Z$ and $\Z/p$.

\begin{definition}\label{fixediso}Select a fixed isomorphism $\psi:\pi_n(K(\Q,n))\to\Q$. We may do this
so that for some $i\in\N$, if we define $\Sigma_{\Q}=S_i^n$, then the following is true.  Let
$\kappa_{\Q}:\Sigma_{\Q}\hookrightarrow K(\Q,n)$.  Then the composition $\psi\circ\pi_n(\kappa_{\Q}):\pi_n(\Sigma_{\Q})
\to\Q$ sends $\pi_n(\Sigma_{\Q})$ isomorphically onto the standard copy of $\Z$ in $\Q$.
\end{definition}

\begin{lemma}\label{twoofthegroups}Let $n\in\N$ and $\mu$ be an $(n+1)$-simplex.
\begin{enumerate}\item If $f:\partial\mu\to\Sigma_{\Z}$ is a map such that
$\kappa_{\Z}\circ f:\partial\mu\to K(\Z,n)$ extends to a map $f^*:\mu\to K(\Z,n)$, then
$f$ is homotopic to a constant map.
\item If $f:\partial\mu\to\Sigma_{\Z/p}$ is a map such that $\kappa_{\Z/p}\circ f:\partial\mu\to K(\Z/p,n)$
extends to a map $f^*:\mu\to K(\Z/p,n)^{(n+1)}$, then $\deg(f)$ is a multiple of $p$.
\item If $f:\partial\mu\to\Sigma_{\Q}$ is a map such
that $\kappa_{\Q}\circ f:\partial\mu\to K(\Q,n)$ extends to a map $f^*:\mu\to K(\Q,n)$, then
$\deg(f)=0$.
\end{enumerate}
\end{lemma}

\begin{proof}Note that $\pi_n(\kappa_{\Z}):\pi_n(\Sigma_{\Z})\to\pi_n(K(\Z,n))$ is an
isomorphism.  Assume that $f$ is not homotopic to a constant map.  It follows that there exists $k\in\N$ such
that $\deg(f)\in\{k,-k\}$.  Select a generator $g$ of $\pi_n(\partial\mu)$.  Then for some
generator $\overline g$ of $\pi_n(\Sigma_{\Z})$, $\pi_n(f)(g)=k\cdot\overline g$
is a nontrivial element of $\pi_n(\Sigma_{\Z})$.  Hence
$\pi_n(\kappa_{\Z})(k\cdot\overline g)=\pi_n(\kappa_{\Z})(\pi_n(f)(g))
=\pi_n(\kappa_{\Z}\circ f)(g)$ is a nontrivial element of
$\pi_n(K(\Z,n))$.  But by the assumption in (1), $\kappa_{\Z}\circ f:\partial\mu\to K(\Z,n)$ extends
to the map $f^*:\mu\to K(\Z,n)$ showing that $\pi_n(\kappa_{\Z}\circ f)(g)=\pi_n(\kappa_{\Z})
\circ\pi_n(f)(g)$ is the trivial element of $\pi_n(K(\Z,n))$.  Since $\pi_n(\kappa_{\Z})$
is an isomorphism, it has to be true that $\pi_n(f)(g)$ is the trivial element of
$\pi_n(K(\Z,n))$, a contradiction.  We have demonstrated (1).

Next we prove (2). Note that $K(\Z/p,n)^{(n+1)}$ is a Moore space of type $(\Z/p,n)$.  We have that
$\kappa_{\Z/p}\circ f:\partial\mu\to K(\Z/p,n)^{(n+1)}$ extends to a map $f^*:\mu\to K(\Z/p,n)^{(n+1)}$.
This shows that $\pi_n(\kappa_{\Z/p})\circ\pi_n(f)=\pi_n(\kappa_{\Z/p}\circ f):\pi_n(\partial\mu)\to
\pi_n(K(\Z/p,n)^{(n+1)})$ is the trivial map of $\pi_n(\partial\mu)$.  It follows from $(*)$ that
$\deg(f)$ is a multiple of $p$.

Let us prove (3).  Suppose that $\deg(f)\neq0$.
Consider the case that $\deg(f)\in\{1,-1\}$.  Then $\pi_n(f):\pi_n(\partial\mu)\to\pi_n(\Sigma_{\Q})$
is an isomorphism.  This and Definition \ref{fixediso}, show that

$(\dag_1)$  $\psi\circ\pi_n(\kappa_{\Q})\circ\pi_n(f)=\psi\circ\pi_n(\kappa_{\Q}\circ f)$
sends $\pi_n(\partial\mu)$ isomorphically onto $\Z\subset\Q$.

But the assumption that
$\kappa_{\Q}\circ f$ extends to a map of $\mu$ into $K(\Q,n)$ shows that the homomorphism
$\pi_n(\kappa_{\Q}\circ f)$ is trivial, which in turn implies that
$\psi\circ\pi_n(\kappa_{\Q}\circ f)$ sends $\pi_n(\partial\mu)$ trivially into $\Q$, a contradiction.

We consider the case that $k\geq2$ and $\deg(f)\in\{k,-k\}$.  Then

$(\dag_2)$ there is a generator $\overline g$ of $\pi_n(\Sigma_{\Q})$ such that
$\pi_n(f)(g)=k\cdot\overline g$ which is a nontrivial element of $\pi_n(\Sigma_{\Q})$.

Hence, $\psi\circ\pi_n(\kappa_{\Q})(k\cdot\overline g)=\psi\circ\pi_n(\kappa_{\Q})(\pi_n(f)(g))=
\psi\circ\pi_n(\kappa_{\Q})\circ\pi_n(f)(g)=\psi\circ\pi_n(\kappa_{\Q}\circ f)(g)$
is a nontrivial element of $\Z\subset\Q$.  But the assumption in (3) implies that $\kappa_{\Q}\circ f$
is homotopic to a constant map, so $\psi\circ\pi_n(\kappa_{\Q}\circ f)(g)$ is the trivial element of $\Z\subset\Q$,
a contradiction.
\end{proof}

\section[Simple Extensions]
{Simple Extensions}\label{theextensions}

In \cite{Wa}, \cite{Dr} (see also \cite{RT}), and \cite{Le}, the proofs of Theorems
\ref{cellresol}--\ref{rationalres} made
use of increasingly more complex extensions.  One of our goals, as stated in the Introduction, is to provide
proofs of these theorems that employ much simpler extensions.  The groundwork for producing them was laid in
Sections \ref{fundext} and \ref{EMacs}.  Now we shall provide the precise definitions for the
extensions we are going to use.

When $n\in\N$ is given, $G\in\{\Z,\Z/p,\Q\}$, and $L$ is a finite simplicial complex, then the $\CW$-complex
$F_{L,K,\Sigma^n}$ of Definition \ref{allstickons} will be constructed with $\Sigma^n=\Sigma_G$,
the particular canonical copy of $S^n$ in $K$ (see Definition \ref{fixediso} in case
$G=\Q$), and we shall conserve notation by simply denoting $F_{L,K,\Sigma^n}=F_{L,K}$.
For each $\sigma\in L$ with $\dim\sigma=n+1$, the inclusion $\kappa_\sigma:\partial\sigma
\hookrightarrow K_\sigma$ will correspond to the inclusion $\kappa_G:\Sigma_{G}
\hookrightarrow K$, $G\in\{\Z,\Z/p,\Q\}$.

Lemma \ref{twoofthegroups} can now be applied to arrive at the following.

\begin{lemma}\label{missingskeleton}Let $L$ be a finite simplicial complex, $\sigma\in L$ with
$\dim\sigma=n+1$, and $\mu$ an $(n+1)$-simplex.
\begin{enumerate}\item If $K=K(\Z,n)$ and $f:\partial\mu\to\partial\sigma$ is a map such that
$f=\kappa_\sigma\circ f:\partial\mu\to K_\sigma$ extends to a map $f^*:\mu\to K_\sigma$, then $\deg(f)=0$.
\item If $K=K(\Z/p,n)$ and $f:\partial\mu\to\partial\sigma$ is a map such that
$f=\kappa_\sigma\circ f:\partial\mu\to K_\sigma$ extends to a map $f^*:\mu\to K_\sigma^{(n+1)}$, then
$\deg(f)$ is a multiple of $p$.
\item If $K=K(\Q,n)$ and $f:\partial\mu\to\partial\sigma$ is a map such
that $f=\kappa_\sigma\circ f:\partial\mu\to K_\sigma$ extends to a map $f^*:\mu\to K_\sigma$, then
$\deg(f)=0$.
\end{enumerate}
\end{lemma}

\section[Proposition for Control of the $(n+1)$-skeleton]
{Proposition for Control of the $(n+1)$-skeleton}\label{propfor}

\begin{proposition}\label{pushandshove2}Let $n\in\N$, $K\in\{K(\Z,n),K(\Z/p,n),K(\Q,n)\}$, $E$ a
compact polyhedron, $L$ a finite simplicial complex,
$\overline p:E\to|L|$ a map, and $S$ an $n$-regular subdivision of $L$.  Suppose that for each $\sigma\in L$
with $\dim\sigma=n+1$, the map $\overline p|(\overline p)\inv(|N(\partial\sigma,S)|):
(\overline p)\inv(|N(\partial\sigma,S)|)\to
|N(\partial\sigma,S)|\subset K_{\sigma,+}$ extends to a map of $E$ to $K_{\sigma,+}$. Let $N$ be a
triangulation of $E$ that admits a simplicial approximation $\overline p_1:|N|\to|L|$ of the map $\overline p$,
$L_0$ a subdivision of $N$, and $\varphi:|L_0|\to|N|$ a simplicial approximation to $\id:|L_0|\to|N|$.
Then there exists a map $g_K:|N^{(n+1)}|\to|L^{(n+1)}|$ such that:

\begin{enumerate}
\item  $g_K$ is an $L$-modification of both $\overline p_1\big||N^{(n+1)}|:|N^{(n+1)}|\to
|L^{(n+1)}|\subset|L|$
and $\overline p\big||N^{(n+1)}|:|N^{(n+1)}|\to|L|$;

\item  $g_K(|N^{(n+1)}|)\subset|L^{(n+1)}|$, $g_K(|N^{(n)}|)\subset|L^{(n)}|$,
$g_K\circ\varphi(|L_0^{(n+1)}|)\subset|L^{(n+1)}|$, and $g_K\circ\varphi(|L_0^{(n)}|)\subset|L^{(n)}|$;

\item for each $x\in|L_0^{(n+1)}|$, there exists $\sigma\in L$ with $\{\overline p(x),
g_K\circ\varphi(x)\}\subset\sigma$;

\item for each $\nu\in L_0^{(n+1)}$, $\overline p_1(\varphi(\nu))\in L^{(n+1)}$,
and $g_K(\varphi(\nu))\subset\overline p_1(\varphi(\nu))$;

\item  $g_K(x)=\overline p_1(x)$ for all $x\in(\overline p_1)\inv(|L^{(n)}|)\cap|N^{(n+1)}|$;

\item in case $K=K(\Z,n)$ and $\tau\in N^{(n+1)}$, then
$g_K(\tau)\subset|L^{(n)}|$ so $g_K(|N^{(n+1)}|)\subset|L^{(n)}|$; the preceding implies that
$g_K\circ\varphi(|L_0^{(n+1)}|)\subset g_K(|N^{(n+1)}|)\subset|L^{(n)}|$.
\smallskip

Now let $v\in N^{(0)}$, $N_v$, $L_{0,v}$ respectively be the subcomplexes of $N$, $L_0$
with $|N_v|=|L_{0,v}|=\overline\st(v,N)$, and $\widetilde L$ be a subcomplex of $L$ with
$g_K(|N_v|)\subset\widetilde L$.  Then,
\smallskip

\item both $g_K(|N_v^{(n)}|)\subset|\widetilde L^{(n)}|$ and
$g_K(\varphi(|L_{0,v}^{(n)}|))\subset|\widetilde L^{(n)}|$,

\item In case $K=K(\Z,n)$, then $g_K\circ\varphi\big||L_{0,v}^{(n)}|:
|L_{0,v}^{(n)}|\to|\widetilde L^{(n)}|$ is null homotopic;

\item in case $K=K(\Z/p,n)$, then the induced homomorphism $H_n(g_K\circ\varphi
\big||L_{0,v}^{(n)}|;\Z/p):H_n(|L_{0,v}^{(n)}|;\Z/p)\to H_n(|\widetilde L^{(n)}|;\Z/p)$ is trivial, and

\item in case $K=K(\Q,n)$, then the induced homomorphism $H_n(g_K\circ\varphi
\big||L_{0,v}^{(n)}|;\Z):H_n(|L_{0,v}^{(n)}|;\Z)\to H_n(|\widetilde L^{(n)}|;\Z)$ is trivial.
\end{enumerate}
\end{proposition}

\begin{proof}We shall treat the pair $\big(|N^{(n+1)}|,(\overline p_1)\inv(|L^{(n)}|)\cap|N^{(n+1)}|\big)$
as a relative $\CW$-complex (see 7.6.4, p. 401 of \cite{Sp} for the notion of a relative $\CW$-complex).
We want to apply Lemma \ref{Edwardstype} to $(n,L,K,\Sigma^n)$ with $K$ as given and $\Sigma^n$
the canonical copy of $S^n$ in $K$ (see Section \ref{theextensions}).  The map $f$ of Lemma \ref{Edwardstype}
is replaced by our $\overline p:E\to|L|$; the simplicial approximation $f_1$ is the current $\overline p_1$.
Let $h_K:|N^{(n+1)}|\to F_{L,K}^*$ be a map as granted to us by Lemma \ref{Edwardstype}.
From Lemma \ref{Edwardstype}(1), it is seen that $h_K$ is a pair map,
$$h_K:(|N^{(n+1)}|,(\overline p_1)\inv(|L^{(n)}|)\cap|N^{(n+1)}|)\to(F_{L,K}^*,|L^{(n)}|).$$
Of course, $(F_{L,K}^*,|L^{(n)}|)$ is also a relative $\CW$-complex.
Part $(1)$ of Lemma \ref{Edwardstype} shows in addition that since $h_K$ equals the simplicial map
$\overline p_1$ on $(\overline p_1)\inv(|L^{(n)}|)\break\cap|N^{(n+1)}|$, then

$(\dag_1)$  $h_K$ is simplicial from $(\overline p_1)\inv(|L^{(n)}|)\cap|N^{(n+1)}|$ to $|L^{(n)}|$.

Using the cellular approximation theorem (7.17, p. 404 of \cite{Sp}),
one finds a relative cellular approximation $$h_K^*:(|N^{(n+1)}|,(\overline p_1)\inv(|L^{(n)}|)\cap|N^{(n+1)}|)
\to(F_{L,K}^*,|L^{(n)}|)$$ of the map $h_K$. This is done so that

$(\dag_2)$ $h_K^*=h_K=\overline p_1$ on $(\overline p_1)\inv(|L^{(n)}|)\cap|N^{(n+1)}|$.

The cellular approximation theorem requires that if $x\in|N^{(n+1)}|$ and $h_K(x)$ lies in a
subcomplex $M$ of $F_{L,K}^*$, then $h_K^*(x)\in M^{(n+1)}$.  It then follows that

$(\dag_3)$  if $\sigma\in L$ with $\dim\sigma=n+1$, $x\in|N^{(n+1)}|$, and
$h_K(x)\in K_\sigma$, then $h_K^*(x)\in K_\sigma^{(n+1)}$.

Using Lemma \ref{meetinboundary}(1,2), we may treat

$(\dag_4)$ $r_{L,K}:(F_{L,K}^*,|L^{(n)}|)\to(|L^{(n+1)}|,|L^{(n)}|)$ as a pair map where
$r_{L,K}(x)=x$ for all $x\in|L^{(n)}|$.

Now we define

$(\dag_5)$  $g_K:|N^{(n+1)}|\to|L^{(n+1)}|$ by $g_K(x)=r_{L,K}\circ h_K^*(x)$.

From this and $(\dag_4)$, it follows that,

$(\dag_6)$  if $x\in|N^{(n+1)}|$ and $h_K^*(x)\in|L^{(n)}|$, then $g_K(x)=h_K^*(x)$.

To check (1), let $x\in|N^{(n+1)}|$ and let $\tau\in L$ be the unique simplex with $\overline p(x)\in\inter\tau$.
Since $\overline p_1$ is a simplicial approximation of $\overline p$, then
$\overline p_1(x)\in\tau$. But $x\in|N^{(n+1)}|$ and $\overline p_1$
is simplicial, so there exists a unique $\sigma\in L^{(n+1)}$ with $\sigma\subset\tau$
and $\overline p_1(x)\in\inter\sigma$.

If $x\in(\overline p_1)\inv(|L^{(n)}|)\cap|N^{(n+1)}|$, then by $(\dag_2)$,
$h_K^*(x)=h_K(x)=\overline p_1(x)\in|L^{(n)}|$.
Using $(\dag_6)$, one sees that $g_K(x)=h_K^*(x)=\overline p_1(x)\in\sigma\subset\tau$.  So $g_K$ is an
$L$-modification of $\overline p_1$ with respect to the domain $(\overline p_1)\inv(|L^{(n)}|)\cap|N^{(n+1)}|$.

The alternate case is that $x\notin(\overline  p_1)\inv(|L^{(n)}|)$ and
hence $\overline p_1(x)\in|L^{(n+1)}|\setminus|L^{(n)}|$.
So $\dim\sigma=n+1$.  By Lemma \ref{Edwardstype}(2),
$h_K((\overline p_1)\inv(\sigma)\cap|N^{(n+1)}|)\subset K_\sigma$.
But $x\in(\overline p_1)\inv(\sigma)\cap|N^{(n+1)}|$, so $h_K(x)\in K_\sigma$.
This puts $h_K^*(x)\in K_\sigma$.
Applying Lemma \ref{meetinboundary}(2,3), one sees that $r_{L,K}(K_\sigma)\subset\sigma$.
Therefore $g_K(x)=r_{L,K}\circ h_K^*(x)\in\sigma\subset\tau$.  So $g_K$ is an
$L$-modification of $\overline p_1$ with respect to the complementary part of the domain.

Putting the two cases together, one sees that $g_K$ is an $L$-modification of
$\overline p_1\big||N^{(n+1)}|:|N^{(n+1)}|\to|L^{(n+1)}|$.
So the first part of (1) is true. To show that $g_K$ is an $L$-modification of
$\overline p\big||N^{(n+1)}|:|N^{(n+1)}|\to|L|$, let $x\in|N^{(n+1)}|$ and $\sigma\in L$
be the unique simplex with $\overline p(x)\in\inter\sigma$. Then because $\overline p_1:|N|\to|L|$
is a simplicial approximation of $\overline p$, one has that $\overline p_1(x)\in\sigma$.
By the first part of (1), $g_K(x)\in\sigma$, as required to complete our proof of (1).

Statement (2) follows from (1) and the fact that $\overline p_1\big||N^{(n+1)}|:|N^{(n+1)}|\to|L^{(n+1)}|$
is simplicial, and hence both $g_K(|N^{(n+1)}|)\subset|L^{(n+1)}|$ and $g_K(|N^{(n)}|)\break\subset|L^{(n)}|$
hold.

Now we prove (3).  Let $x\in|L_0^{(n+1)}|\subset|N|$, $\sigma\in L$ be the unique simplex
with $\overline p(x)\in\inter\sigma$,
$\mu\in L_0^{(n+1)}$ the unique simplex with $x\in\inter\mu$, and $\lambda\in N$
the unique simplex with $x\in\inter\lambda$.  Then $\varphi(\mu)\in N^{(n+1)}$
and $\varphi(x)\in\varphi(\mu)\subset\lambda$.
Now, $\overline p_1:|N|\to|L|$ is a simplicial approximation of $\overline p$,
so $\overline p_1(x)\in\sigma$.  It follows that $\overline p_1(\lambda)\subset\sigma$, so
$\overline p_1\circ\varphi(x)\in\sigma$. But, $g_K$ is an $L$-modification of $\overline p_1$,
so $g_K\circ\varphi(x)\in\sigma$.  Thus $\{g_K\circ\varphi(x),\overline p(x)\}\subset\sigma$.

The first part of (4) is true because both $\overline p_1$ and $\varphi$ are simplicial.
The second part comes from the first part and (1). One obtains (5) from  $(\dag_2)$, $(\dag_4)$, and $(\dag_5)$.

In order to prove properties from (6) onward, let us notice some additional facts.
Since $\overline p_1:|N|\to|L|$ is simplicial, then for each $\tau\in N^{(n+1)}$, $\overline p_1(\tau)\in
L^{(n+1)}$. Let us use this in proving that,

$(\dag_7)$ if $\tau\in N^{(n+1)}$ and $\dim\tau=n+1=\dim\overline p_1(\tau)$,
then $h_K^*(\tau)\subset K_{\overline p_1(\tau)}^{(n+1)}$,
and $h_K^*(\partial\tau)\subset K_{\overline p_1(\tau)}^{(n)}=\partial(\overline p_1(\tau))$.

Now $\tau\subset(\overline p_1)\inv(\overline p_1(\tau))\cap|N^{(n+1)}|$.
By Lemma \ref{Edwardstype}(2) with $\overline p_1(\tau)$ in place of $\sigma$,
$h_K((\overline p_1)\inv(\overline p_1(\tau))\cap|N^{(n+1)}|)\subset K_{\overline p_1(\tau)}$. By $(\dag_3)$,
$h_K^*((\overline p_1)\inv(\overline p_1(\tau))\cap|N^{(n+1)}|)\subset K_{\overline p_1(\tau)}^{(n+1)}$.
Hence the first part of $(\dag_7)$ holds true.  The second part is true because
$h_K^*$ is cellular. Next we show that,

$(\dag_8)$ if $\tau\in N^{(n+1)}$ and $\dim\tau=n+1=\dim\overline p_1(\tau)$,
then $g_K(\tau)\subset\overline p_1(\tau)$,
$g_K(\partial\tau)\subset\partial(\overline p_1(\tau))$, and $g_K|\partial\tau=h_K^*|\partial\tau$.

By the first part of $(\dag_7)$, $h_K^*(\tau)\subset K_{\overline p_1(\tau)}^{(n+1)}$, and
by $(\dag_5)$, $x\in\tau$ implies that $g_K(x)=r_{L,K}\circ h_k^*(x)$.  Apply this with
Lemma \ref{meetinboundary}(3) to obtain the first part of $(\dag_8)$. The second
and third parts follow respectively from the second part of $(\dag_7)$ and $(\dag_6)$.

If we put together $(\dag_7)$ and $(\dag_8)$, we arrive at the next fact.

$(\dag_9)$ if $\tau\in N^{(n+1)}$ and $\dim\tau=n+1=\dim(\overline p_1(\tau))$, then
$g_K|\partial\tau:\partial\tau\to\partial(\overline p_1(\tau))$ extends to a map
of $\tau$ to $K_{\overline p_1(\tau)}$.

To prove (6), consider first the case that $\overline p_1(\tau)\subset|L^{(n)}|$, i.e.,
$\overline p_1(\tau)\in L^{(n)}$.  Then use
(5) to see that $g_K(\tau)=\overline p_1(\tau)\subset|L^{(n)}|$.  In the complementary case, $\overline p_1(\tau)\in
L^{(n+1)}$ and $\dim\overline p_1(\tau)=n+1$.
Note that since $K=K(\Z,n)$, then $K_{\overline p_1(\tau)}^{(n+1)}=
K_{\overline p_1(\tau)}^{(n)}=\partial(\overline p_1(\tau))$.  This and $(\dag_7)$ show that $h_K^*(\tau)
\subset\partial(\overline p_1(\tau))\subset|L^{(n)}|$. Now employ $(\dag_6)$ to see that
$g_K(\tau)=h_K^*(\tau)\subset\partial(\overline p_1(\tau))\subset|L^{(n)}|$.  The first part of (7) follows
from the second part of (2); its second part uses the first part and the fact that $\varphi$ is a simplicial
approximation of the identity.  As to (8),
use Lemma \ref{ktok+1} to see that there is a homotopy $H:|L_{0,v}^{(n)}|\times[0,1]\to|L_{0,v}^{(n+1)}|$
between the inclusion $\iota:|L_{0,v}^{(n)}|\hookrightarrow|L_{0,v}^{(n+1)}|$ and
a constant map.  Now apply (6) to the homotopy $g_K\circ\varphi\circ H:|L_{0,v}^{(n)}|\times[0,1]
\to|\widetilde L^{(n)}|$ (with appropriate domain restrictions) to get a null homotopy
as desired in (8).

In proving (9) and (10) we shall use the following fact from Lemma \ref{isfreeab}.

$(\dag_{10})$
There exists $m\in\N$ and a set $\{\sigma_i\,|\,1\leq i\leq m\}$ of
$(n+1)$-simplexes of $N_v$, such that for any abelian group $G$ and each $z\in Z_n(|N_v^{(n)}|;G)$,
there is a set $\{g_i\,|\,1\leq i\leq m\}\subset G$ with $z=\sum_{i=1}^m g_i\cdot\partial\sigma_i$.

Statement (9) will be true if we can prove that

$(\dag_{11})$  $H_n(g_K;\Z/p):H_n(|N_v^{(n)}|;\Z/p)\to
H_n(\widetilde L^{(n)};\Z/p)$ is trivial.

We see that $(\dag_{11})$ is true if for each $z=\sum_{i=1}^m g_i\cdot\partial\sigma_i
\in Z_n(|N_v^{(n)}|;\Z/p)$, $Z_n(g_K;\Z/p)(z)$ is homologous to $0$ in
$Z_n(\widetilde L^{(n)};\Z/p)$. Let $1\leq i\leq m$ and with $\sigma_i$ in place of $\tau$,
let us consider the two cases that we explored in our proof of (6) above.  In the first
of these, we have that $\overline p_1(\sigma_i)\in L^{(n)}$ and $g_K(\sigma_i)=\overline p_1(\sigma_i)$.
It follows that $g_K$ maps $\partial\sigma_i$ trivially into $|\widetilde L^{(n)}|$.  So,
$Z_n(g_K;\Z/p)(g_i\cdot\partial\sigma_i)$ is homologous to $0$ in $Z_n(\widetilde L^{(n)};\Z/p)$.
In the second case, using $(\dag_9)$ with $\sigma_i$ replacing $\tau$, one has that $g_K|\partial\sigma_i:
\partial\sigma_i\to\partial(\overline p_1(\sigma_i))$ extends
to a map of $\sigma_i$ into $K_{\overline p_1(\sigma_i)}^{(n+1)}$.  By Lemma
\ref{missingskeleton}(2), $g_K|\partial\sigma_i:
\partial\sigma_i\to\partial(\overline p_1(\sigma_i))$ has degree a multiple of $p$.
From this and the first part of Lemma \ref{zeroout}(2), it follows that $Z_n(g_K;\Z/p)(g_i\cdot\partial\sigma_i)$
is homologous to $0$ in $Z_n(\partial(\overline p_1(\sigma_i));\Z/p)$ and hence in
$Z_n(\widetilde L^{(n)};\Z/p)$.  This gives us (9).  To obtain (10), use the same argument
we just made, but replace $\Z/p$ with $\Z$ and the first part of Lemma \ref{zeroout}(2) by
the second part of Lemma \ref{zeroout}(2).
\end{proof}

\section[Extensor Lemma]
{Extensor Lemma}\label{ExtenLem}

For the remainder of this paper, $I^\infty$ will denote the
Hilbert cube, i.e., $I^\infty=\prod\{I_i\,|\,i\in\N\}$ where
$I_i=[0,1]$ for each $i$.  For each $k\in\N$, we use $I^k$ to denote
$\{x\in I^\infty\,|\,x_i=0\,\,\mathrm{for}\,\,i>k\}$.
Of course if $1\leq j\leq k$, then $I^j\subset I^k$, and $p_j^k:I^k \to I^j$ denotes the coordinate projection.
If $P\subset I^k$, then
$P^{[\infty]}$ will designate the set of $x$ in $I^\infty$ whose first $k$ coordinates
are the same as those of an element of $P$.
Plainly if $P\subset Q\subset I^k$, then $P^{[\infty]}\subset
Q^{[\infty]}$. One sees that if $P$ is closed, respectively open, in $I^k$, then
$P^{[\infty]}$ is closed, respectively open, in $I^\infty$.
Let $p_{k,\infty}:I^\infty\to I^k$ denote the coordinate projection of the Hilbert cube to its finite part,
and we have that  $(p_{j,\infty}|I^k)\circ p_{k,\infty}=p_{j,\infty}:I^\infty\to I^j$.
It is therefore true that
$$(\dag_2)\,\,\,p_j^k\circ p_{k,\infty}=p_{j,\infty}:I^\infty\to I^j.$$
If $1\leq i\leq j\leq k$, then
$$(\dag_3)\,\,\,p_i^j\circ p_j^k=p_i^k:I^k\to I^i.$$

We shall use the metric $\rho$ on $I^\infty$ given by
$\rho(x,y)=\sum_{i=1}^{\infty}\frac{\vert x_i-y_i\vert}{2^i}$.
With this metric on $I^\infty$, $\diam I^\infty=1$. Moreover, if $1\leq j\leq k$
and $y\in I^k$, then
$$(\dag_4)\,\,\,\rho(p_j^k(y),y)<\frac{1}{2^j}.$$
Similarly, for any $x\in I^\infty$,
$$(\dag_5)\,\,\,\rho(p_{j,\infty}(x),x)\leq\frac{1}{2^j}.$$

The main result of this section is Lemma \ref{endset}, a type of ``extensor lemma.''  It provides us
with a statement, see (2) of that lemma, about extending a map under the condition that a given compactum $X$ is
a subspace of $I^\infty$.  A couple of preliminaries will be useful.

\begin{lemma}\label{closearehomtop}Let $K$ be a $\CW$-complex.
Then $K$ has an open cover $\mathcal{V}$ such that any two
$\mathcal{V}$-close maps of any space to $K$ are homotopic.
\end{lemma}

\begin{proof}There exists a simplicial complex $L$ such that
$|L|_m$, that is $|L|$ with the metric topology, is homotopy
equivalent to $K$.  Choose a homotopy equivalence
$f:K\to|L|_m$. By Theorem III.11.3 (page 106) of \cite{Hu},
$|L|_m$ is an ANR.  Theorem IV.1.1 (page 111) of \cite{Hu}
shows that there is an open cover $\mathcal{W}$ of $|L|_m$
having the property that any two $\mathcal{W}$-close maps of
any space to $|L|_m$ are homotopic.  The open cover needed for
$K$ is $\mathcal{V}=f\inv(\mathcal{W})$.
\end{proof}

\begin{lemma}\label{intersectinH} Let $X\subset I^\infty$ be compact and nonempty.
Then there exist an increasing sequence $(n_j)$ in $\N$ and a sequence $(P_j)$
of nonempty compact polyhedra $P_j\subset I^{n_j}$ such that for all $j\in\N:$
\begin{enumerate}
\item  $X\subset\inter_{I^\infty}(P_j^{[\infty]})\subset P_j^{[\infty]}
\subset N(X,\frac{2}{j})$,
\item $p_{n_j}^{n_{j+1}}(P_{j+1})\subset\inter_{I^{n_j}}P_j$, and
\item $P_{j+1}^{[\infty]}\subset\inter_{I^\infty}(P_j^{[\infty]})$.
\end{enumerate}

Then,

$(*_1)$  $X=\bigcap\{P_j^{[\infty]}\,|\,j\in\N\}$.

Suppose moreover, that $j\in\N$ and $B_j\subset P_j$.  Then,

$(*_2)$ if $k\geq j$, and $B_k=(p_{n_j}^{n_k})\inv(B_j)\cap P_k$, one has that
$B_k\subset P_k\subset I^{n_k}$ and $p_{n_j}^{n_k}(B_k)\subset B_j$, and

$(*_3)$ if we put $B_{j,\infty}=p_{n_j,\infty}\inv(B_j)\cap X$, then for any open
neighborhood $S$ of $B_{j,\infty}$ in $I^\infty$, there exists $k\geq j$ such
that for all $l\geq k$, $B_l\subset S$.

If the increasing sequence $(n_j)$ is replaced by an increasing subsequence, then
all of the above still hold true.
\end{lemma}

\begin{proof}We shall construct the sets $P_j$ by recursion. Put $n_1=1$ and $P_1=I^1$.
One sees that $P_1\subset P_1^{[\infty]}=I^\infty$, so $(1)$ is true in case $j=1$
since $\diam I^\infty=1$.  Because we are using a recursive
construction, when we come to a particular $j$, we only have to consider statements (1)--(3)
``up to $j$.'' Hence (2) and (3) need not be considered yet, and we will deal with
$(*_1)$--$(*_3)$ later.

Suppose that $j\in\N$, and we have found finite sequences $1=n_1<\dots<n_j$
in $\N$ and compact polyhedra $P_1,\dots, P_j$ such that
for $1\leq s\leq j$, $P_s\subset I^{n_s}$, and $(1)$--
$(3)$ are true up to $j$.

One may choose $n_{j+1}\in\N$ such that $n_{j+1}>n_j$ and $X\subset
(p_{n_{j+1},\infty}(X))^{[\infty]}\subset N(X,\frac{2}{j+1})$.
There is a neighborhood $V$ of $p_{n_{j+1},\infty}(X)$ in $I^{n_{j+1}}$ such that
$V^{[\infty]}\subset N(X,\frac{2}{j+1})$. Choose a compact
polyhedron $P_{j+1}$ of $I^{n_{j+1}}$ so that,
$p_{n_{j+1},\infty}(X)\subset\inter_{I^{n_{j+1}}} P_{j+1}
\subset P_{j+1}\subset V$.  From this, $X\subset(p_{n_{j+1},\infty}(X))^{[\infty]}
\subset(\inter_{I^\infty}P_{j+1})^{[\infty]}\subset P_{j+1}^{[\infty]}\subset V^{[\infty]}
\subset N(X,\frac{2}{j+1})$. This gives us $(1)$ for $j+1$.

Notice that $(1)$ for $j$ implies,
$p_{n_j,\infty}(X)=p_{n_j}^{n_{j+1}}\circ p_{n_{j+1},\infty}(X)\subset
\inter_{I^{n_j}}P_j$. Hence,
$p_{n_{j+1},\infty}(X)\subset(p_{n_j}^{n_{j+1}})^{-1}(\inter_{I^{n_j}}(P_j))$.
Thus, making $P_{j+1}$ smaller if necessary, we may have $(1)$ and
simultaneously, $P_{j+1}\subset(p_{n_j}^{n_{j+1}})^{-1}(\inter_{I^{n_j}}(P_j))$.
This achieves $(2)$ for $j+1$. From (2) we get (3).

One gets $(*_1)$ readily from (1), and $(*_2)$ is true for elementary reasons.
To prove $(*_3)$ let $S$ be an open neighborhood of $B_{j,\infty}$ in
$I^\infty$.  If the conclusion of $(*_3)$ is not true, then there
is an increasing sequence $(m_i)$ in $\N$, $m_1\geq j$, so that
for each $i$, there exists $b_i\in B_{m_i}\setminus S
\subset P_{m_i}$.  Passing to a subsequence if necessary,
we may assume that the sequence $(b_i)$ in the compactum
$I^\infty\setminus S$ converges in $I^\infty$ to $b\in I^\infty\setminus S$.
Applying $(*_1)$ and $(*_2)$ along with the fact that $b_i\in P_{m_i}$, one sees that $b\in X
\setminus B_{j,\infty}$, from which we deduce that $p_{n_j,\infty}(b)\notin B_j$.

For each $i$, $p_{n_j}^{s_i}(b_i)=p_{n_j,\infty}(b_i)\in B_j$, $s_i=n_{m_i}$.
Hence $\{p_{n_j,\infty}(b_i)\,|\,i\in\N\}\subset B_j$.  Since $B_j$ is closed in
$P_j$, $p_{n_j,\infty}$ is a map, and $(b_i)$ converges to $b$, then $p_{n_j,\infty}(b)\in B_j$, a
contradiction.  This yields $(*_3)$.  We leave the validation of the final statement
to the reader.
\end{proof}

In reading Lemma \ref{endset}, one should consult Lemma \ref{intersectinH}(2)
to see that whenever $j\leq l$, then $p_{n_j}^{n_l}(P_l)\subset P_j$.

\begin{lemma}\label{endset}Let $X\subset I^\infty$, $(n_j)$, $(P_j)$ be as in
$\mathrm{Lemma\,\,\ref{intersectinH}}$, and $K$ be a $\CW$-complex
with $X\tau K$.   The following are true.
\begin{enumerate}
\item Suppose that $j\in\N$ and $B_j$ is a nonempty closed subset of $P_j$.
For each $k\geq j$, let $B_k=(p_{n_j}^{n_k})\inv(B_j)\cap P_k$.
If $M$ is a finite subcomplex of $K$ and $f:B_j\to M$ is
a map, then there exists $k\geq j$ such that for all $l\geq k$, there
is a map $f^*:P_l\to K$ that extends the composition $f\circ
p_{n_j}^{n_l}|B_l:B_l\to M$, where we treat $p_{n_j}^{n_l}
|B_l:B_l\to B_j$.
\item  Suppose that $K\in\{K(\Z,n),K(\Z/p,n),K(\Q,n)\}$ and $L_j$ is a triangulation of
$P_j$.  Then there exists $k\geq j$
such that for all $l\geq k$, there is a triangulation $N_l$ of $P_l$,
a simplicial approximation $\overline p_j:|N_l|\to|L_j|$ of $p_{n_j}^{n_l}:P_l\to P_j$, and
a map $g_K:|N_l^{(n+1)}|\to|L_j^{(n+1)}|$ as in
$\mathrm{Proposition\,\,\ref{pushandshove2}}$ where $(E,N,L,\overline p,\overline p_1)$
of that proposition is replaced by $(P_l,N_l,L_j,p_{n_j}^{n_l},\overline p_j)$ in
the present context.
\end{enumerate}
\end{lemma}

\begin{proof}We shall first prove (1).
Observe that $B_{j,\infty}=p_{n_j,\infty}\inv(B_j)\cap X$ is a closed subset of $X$ and that
$p_{n_j,\infty}(B_{j,\infty})\subset B_j$. Since $X\tau K$, then
the map $f\circ p_{n_j,\infty}|B_{j,\infty}:B_{j,\infty}\to M$
extends to a map $h:X\to\widetilde M$ where $\widetilde M$ is a
finite subcomplex of $K$ and $M\subset\widetilde M$. Since
$\widetilde M$ is an ANR, we may additionally
assume that there is an open neighborhood $U$
of $X$ in $I^\infty$ and that $h:U\to\widetilde M$.

Employing Lemma \ref{closearehomtop},
select an open cover $\mathcal{V}$ of $\widetilde M$
such that for any space $Y$, any maps $g_1:Y\to\widetilde M$
and $g_2:Y\to\widetilde M$
that are $\mathcal{V}$-close are homotopic.  Let $\mathcal{V}_1$
be an open cover of $\widetilde M$ that star-refines $\mathcal{V}$.
Choose an open cover $\mathcal{W}$ of $B_j$ such that if
$W\in\mathcal{W}$, then there
exists $V_W \in\mathcal{V}_1$ with $f(W)\subset V_W$.

Select a cover $\mathcal{R}$ of
$X$ by sets open in $U$ having the property that for each
$R\in\mathcal{R}$, there exists $V_R\in\mathcal{V}_1$ with
$h(R)\subset V_R$.  Let $S=\bigcup\mathcal{R}\subset U$.  Then
$S$ is an open neighborhood of $X$ in $I^\infty$, and of course $B_{j,\infty}\subset X$.  So
by $(*_3)$ of Lemma \ref{intersectinH}, we may choose $k_0\in\N$ so that $k_0\geq j$
and for all $l\geq k_0$,
$B_l\subset S$.  Using Lemma \ref{intersectinH}(3) and Lemma \ref{intersectinH}($(*_1)$), we may
also require that for such $l$, $P_l\subset S$.

Put $B^*=B_{j,\infty}\cup\bigcup\{B_l\,|\,l\geq k_0\}\subset S$.  An
application of Lemma \ref{intersectinH}($(*_2)$) yields that $p_{n_j,\infty}|B^*:B^*\to B_j$ is a map.
For each $b\in B_{j,\infty}$,
select an open neighborhood $E_b$ of $b$ in $B^*$ such that $p_{n_j,\infty}(E_b)$
is contained in an element $W_b$ of $\mathcal{W}$ and that in
addition, there exists $R_b\in\mathcal{R}$ with $E_b\subset R_b$.
Let $S_0=\bigcup\{E_b\,|\,b\in B_{j,\infty}\}$.  Then $S_0$ is an
open neighborhood of $B_{j,\infty}$ in $B^*\subset I^\infty$.  So
there is an open subset $S_1$ of $I^\infty$ having the property that $S_1\cap B^*=S_0$.
Plainly, $S_1$ is an open neighborhood of $B_{j,\infty}$ in $I^\infty$.
An application of $(*_3)$ of Lemma \ref{intersectinH}, with $S_1$ in place of $S$ gives
us the existence of a $k\geq k_0$ so that for all $l\geq k$, we have $B_l\subset S_1$.
For such $l$, $B_l\subset S_1\cap B^*=S_0$.

We are going to show that for all $l\geq k$,
$f\circ p_{n_j}^{n_l}|B_l:B_l\to|M|\subset|\widetilde M|$
is homotopic to $h|B_l:B_l\to|\widetilde M|$.
For in that case, if we define $h_0=h|B_l:B_l\to|\widetilde M|$, then of course
since $P_l\subset S$, $h_0$ extends to the map
$h|P_l:P_l\to|\widetilde M|$, and the homotopy
extension theorem will complete our proof of (1).

Let $x\in B_l$.  It will be sufficient to show that
$f\circ p_{n_j}^{n_l}(x)$ and $h_0(x)$
lie in an element of $\mathcal{V}$.
There exists $b\in B_{j,\infty}$ such that $x\in E_b$.  Now
$b\in E_b\subset R_b\in\mathcal{R}$.  It follows that
$h(\{b,x\})=\{h(b),h(x)\}=\{h(b),h_0(x)\}\subset V_1=V_{R_b}
\in\mathcal{V}_1$.  One sees from the
definition of $h$ and the fact
that $b\in B_{j,\infty}$, that $h(b)=f\circ p_{n_j,\infty}(b)$.
So we have that $\{f\circ p_{n_j,\infty}(b),h_0(x)\}
\subset V_1\in\mathcal{V}_1$.
We know that $p_{n_j,\infty}(b)\in p_{n_j,\infty}(E_b)\subset W_b\in\mathcal{W}$.
Thus $f\circ p_{n_j,\infty}(b)\in f\circ p_{n_j,\infty}(E_b)
\subset f(W_b)\subset V_2=V_{W_b}\in\mathcal{V}_1$.
Now $x\in E_b\cap B_l\subset E_b\cap P_l\subset
E_b\cap I^{n_l}$, so $p_{n_j,\infty}(x)=p_{n_j}^{n_l}(x)\in
p_{n_j,\infty}(E_b)$, and we see that $f\circ p_{n_j}^{n_l}(x)\in
f\circ p_{n_j,\infty}(E_b)\subset V_2$.
Hence, $\{f\circ p_{n_j,\infty}(b),f\circ p_{n_j}^{n_l}(x)\}\subset
V_2$.  Since $\mathcal{V}_1$ is a star-refinement of $\mathcal{V}$,
$f\circ p_{n_j,\infty}(b)\in V_1\cap V_2$, $h_0(x)\in V_1$,
and $f\circ p_{n_j}^{n_l}(x)\in V_2$,
one may find $V\in\mathcal{V}$ with $\{f\circ p_{n_j}^{n_l}(x),h_0(x)\}
\subset V_1\cup V_2\subset V$.  Our proof of (1) is complete.

Now we must prove (2). Let $S$ be an $n$-regular subdivision of $L_j$.  Suppose
that $\sigma\in L_j$ with $\dim\sigma=n+1$.  Let $M=|N(\partial\sigma,S)|$,
and treat $M$ as a finite $\CW$-subcomplex of $K_{\sigma,+}$.
By Lemma \ref{wearaskirt}(3), $X\tau K_{\sigma,+}$.
Denote $f_\sigma=\id:M\to M$, and put $B_j=M$.  Apply (1) to these data to get $k_\sigma\geq j$
as indicated there.  Put $k=\max\{k_\sigma\,|\,(\sigma\in L_j)\wedge(\dim\sigma=n+1)\}$.
Hence for all $\sigma\in L_j$ with $\dim\sigma=n+1$ and
$l\geq k$, there is a map $f_\sigma^*:P_l\to K_{\sigma,+}$
that extends the composition $f_\sigma\circ p_{n_j}^{n_l}|(p_{n_j}^{n_l})\inv(|N(\partial\sigma,S)|):
(p_{n_j}^{n_l})\inv(|N(\partial\sigma,S)|)\to|N(\partial\sigma,S)|$.

Suppose that $l\geq k$.  Let $E=P_l$, $\overline p=p_{n_j}^{n_l}|P_l:P_l\to P_j$,
and $N_l$ be a triangulation of $P_l$ that
admits a simplicial approximation $\overline p_j:|N_l|\to|L_j|$ of the map $\overline p$.
Now apply Proposition \ref{pushandshove2} to get a map $g_K:|N_l^{(n+1)}|\to|L_j^{(n+1)}|$
as requested.
\end{proof}

\section[Epsilons, Deltas, Maps, and Fibers]
{Epsilons, Deltas, Maps, and Fibers}\label{Technicallemma}

We rely on the notation established in the
first part of Section \ref{ExtenLem} concerning the Hilbert cube $I^\infty$
and its metric $\rho$.  Throughout this section $X\subset I^\infty$
will denote a nonempty compactum.
Assume in addition that we are given an increasing sequence $(n_j)$ in $\N$ and a sequence
$(P_j)$ of nonempty compact polyhedra $P_j\subset I^{n_j}$ satisfying $(1)$-$(3)$ of $\mathrm{Lemma\,\,
\ref{intersectinH}}$. The following technical lemma is similar to Lemma 3.1
of \cite{AJR} (repeated in \cite{RT} as Lemma 2.1).

\begin{lemma}\label{technical}
Suppose that for each $j\in\N$ we have selected a closed subset $T_j$ of $P_j$, $\delta_j>0$,
$\epsilon_j>0$, and a map $g_j^{j+1}:T_{j+1}\to T_j$ so that:
\begin{enumerate}
\item  if $u$, $v\in I^\infty$ and
$\rho(u,v)\leq\epsilon_{j+1}$, then $\rho(p_{n_j,\infty}(u),p_{n_j,\infty}(v))<\delta_j$,
\item $9\cdot2^{-n_j}<\epsilon_j$,
\item $\rho(g_j^{j+1}(u),p_{n_j}^{n_{j+1}}(u))<\delta_j$ for all $u\in T_{j+1}$, and
\item $0<\delta_j<2^{1-n_j}=2\cdot2^{-n_j}$.
\end{enumerate}

Put $\mathbf T=(T_j,g_j^{j+1})$, and $Z=\lim\mathbf T$.
Then $Z$ is a metrizable compactum, and
for each $z=(a_1,a_2,\dots)\in Z\subset\prod_{j=1}^{\infty}T_j$, \begin{enumerate}
\item[$(*_1)$] the associated sequence $(a_j)$
is a Cauchy sequence in $I^\infty$ whose limit lies in $X$, and
\item[$(*_2)$] the function $\pi:Z\to X$ given by $ \displaystyle
\pi(z)= \lim_{j\to\infty}(a_j)$ is a map.
\end{enumerate}

Fix $x\in X$ and for each $j\in\N$, let $B_{x,j}=\overline
N(p_{n_j,\infty}(x),2\delta_j)\cap T_j, B_{x,j}^\#=\overline
N(p_{n_j,\infty}(x),\epsilon_j)\cap T_j$. Then,
\begin{enumerate}
\item[$(*_3)$] $B_{x,j}$, $B_{x,j}^\#$ are closed in $T_j$
and $B_{x,j}\subset B_{x,j}^\#\subset T_j\subset P_j$, and
\item[$(*_4)$]$g_j^{j+1}(B_{x,j+1}^\#)\subset B_{x,j}$.
\end{enumerate}

If we let $\bold T_x=(B_{x,j},g_j^{j+1})$ and $\bold T_x^\#=
(B_{x,j}^\#,g_j^{j+1})$, then, \begin{enumerate}
\item[$(*_5)$] $\lim\bold T_x=\lim\bold T_x^\#\subset Z$,
\item[$(*_6)$] $\pi^{-1}(x)=\lim\bold T_x$,
\item[$(*_7)$] if for each $j$ one has chosen $E_{x,j}$ with $B_{x,j}\subset
E_{x,j}\subset B_{x,j}^\#$, then $g_j^{j+1}(E_{x,j+1})\subset E_{x,j}$ and
with $\bold E_x=(E_{x,j},g_j^{j+1})$, $\pi^{-1}(x)=\lim\bold E_x=\lim\bold T_x$, and lastly
\item[$(*_8)$] if $B_{x,j}\neq\emptyset$ for each $j$, then
$\pi^{-1}(x)\neq\emptyset$.
\end{enumerate}
\end{lemma}

\begin{proof}Since each $T_j$ is a metrizable compactum, then by Theorem \ref{presdim}(1),
$Z=\lim\mathbf T$ is a metrizable compactum.
Employing $(\dag_5)$ of Section \ref{ExtenLem}, we have,

$(\dag_1)$  for all $x\in I^\infty$ and $j\in\N$, $\rho(p_{n_j,\infty}(x),x)
=\rho(p_{n_j}^{n_{j+1}}\circ p_{n_{j+1},\infty}(x),x)\leq 2^{-n_j}$.

The triangle inequality along with $(\dag_1)$, (3), and (4) show
that, independently of choice of $z=(a_1,a_2,\dots)\in Z$, for all $j\in\N$
$\rho(a_j,a_{j+1})=\rho(g_j^{j+1}(a_{j+1}),a_{j+1})
\leq\rho(g_j^{j+1}(a_{j+1}),p_{n_j}^{n_{j+1}}(a_{j+1}))
+\rho(p_{n_j}^{n_{j+1}}(a_{j+1}),a_{j+1})=\rho(g_j^{j+1}(a_{j+1}),p_{n_j}^{n_{j+1}}(a_{j+1}))
+\rho(p_{n_j,\infty}(a_{j+1}),a_{j+1})<\delta_j+2^{-n_j}<2^{1-n_j}+2^{-n_j}=
2\cdot2^{-n_j}+2^{-n_j}=3\cdot2^{-n_j}<2^2\cdot2^{-n_j}$, so

$(\dag_2)$ independently of choice of $z=(a_1,a_2,\dots)\in Z$, for all
$j\in\N$ $\rho(a_j,a_{j+1})<2^{2-n_j}$.

From $(\dag_2)$ and the fact that $(n_j)$ is increasing, such $(a_j)$ is a Cauchy sequence in $I^\infty$.
An application of Lemma \ref{intersectinH}(3) shows that
for each $j$, $P_{j+1}^{[\infty]}\subset P_j^{[\infty]}$.
By Lemma \ref{intersectinH} $(*_1)$, $X=\bigcap_{j=1}^{\infty}P_j^{[\infty]}
\subset I^\infty$.  Since $a_j\in T_j\subset P_j\subset P_j^{[\infty]}$
for each $j$, one concludes the validity of $(*_1)$.
The statement $(*_2)$ is true since $(\dag_2)$ shows that $\pi$ is the
limit of the uniformly convergent sequence of maps $\pi_j:Z\to I^{n_j}\subset I^\infty$ where
$\pi_j(z)=a_j$ whenever $z=(a_1,a_2,\dots)\in Z$.
We prove $(*_3)$ by noting that (4) and (2) imply that $2\delta_j<\epsilon_j$
so that $B_{x,j}\subset B_{x,j}^\#$.

Next let $u\in
B_{x,j+1}^\#\subset\overline N(p_{n_{j+1},\infty}(x),\epsilon_{j+1})$.  Thus,

$(\dag_3)$  $\rho(u,p_{n_{j+1},\infty}(x))\leq\epsilon_{j+1}$, and $u\in T_{j+1}$.

By hypothesis, $g_j^{j+1}(u)\in T_j$.  So it remains to prove that

$(\dag_4)$  $\rho(g_j^{j+1}(u),p_{n_j,\infty}(x))<2\delta_j$.

As a consequence of (1) and the first part of $(\dag_3)$,
$\rho(p_{n_j,\infty}(u),p_{n_j,\infty}\circ p_{n_{j+1},\infty}(x))=
\rho(p_{n_j}^{n_{j+1}}(u),p_{n_j,\infty}(x))<\delta_j$. This,
the triangle inequality, and (3) show that $\rho(g_i^{j+1}(u),p_{n_j,\infty}(x))\leq
\rho(g_i^{j+1}(u),p_{n_j,\infty}(u))+\rho(p_{n_j,\infty}(u),\break p_{n_j,\infty}(x))=
\rho(g_j^{j+1}(u),p_{n_j}^{n_{j+1}}(u))+\rho(p_{n_j}^{n_{j+1}}(u),p_{n_j,\infty}(x))
<\delta_j+\delta_j=2\delta_j$.  This validates $(\dag_4)$ and hence
establishes $(*_4)$. Item $(*_5)$ is an immediate consequence of $(*_3)$ and $(*_4)$.

We now want to prove $(*_6)$, that $\pi^{-1}(x)=\lim\bold T_x$.  If
$a=(a_1,a_2,\dots)$ is a thread of $\bold T_x$, then for $j\in\N$, $a_j\in B_{x,j}$,
and hence $\rho(a_j,p_{n_j,\infty}(x))\leq2\delta_j$. By applying this, $(\dag_1)$,
and (4), $\rho(a_j,x)\leq\rho(a_j,p_{n_j,\infty}(x))+\rho(p_{n_j,\infty}(x),x)
\leq2\delta_j+2^{-n_j}\leq5\cdot2^{-n_j}$.  Hence, $\pi(a)=\lim(a_j)=x$, so
$\lim\bold T_x\subset\pi\inv(x)$.

Towards the opposite inclusion, suppose that a thread $(a_1,a_2,\dots)$
of $\bold T$ lies in $\pi^{-1}(x)$.  Apply the triangle inequality, the
fact that $(a_j)$ converges to $x$, $(\dag_1)$, $(\dag_2)$, (4), and (2) to see that when $j>1$,
$\rho(a_j,p_{n_j,\infty}(x))\leq\rho(a_j,x)+\rho(x,p_{n_j,\infty}(x))\leq
\sum_{k=j}^\infty\rho(a_k,a_{k+1})+2^{-n_j}<\sum_{k=j}^\infty2^{2-n_k}+2^{-n_j}\leq2\cdot2^{2-n_j}
+2^{-n_j}=9\cdot2^{-n_j}<\epsilon_j$.  This puts $a_j\in B_{x,j}^\#$.  So
$(a_1,a_2,\dots)\in\lim\bold T_x^\#=\lim\bold T_x$, showing that $\pi^{-
1}(x)\subset\lim\bold T_x$.  Hence $\pi^{-1}(x)=\lim\bold T_x$ as we had
proclaimed.

The statement $(*_7)$ is plainly true.  Employing Theorem \ref{presdim}(3) and $(*_6)$,
one sees that if $B_{x,j}\neq\emptyset$ for each $j$, then $\lim\bold T_x=\pi\inv(x)
\neq\emptyset$, which establishes $(*_8)$.
\end{proof}

\begin{lemma}\label{squeezeplay}  Let $j\in\N$, $\epsilon_j>0$, $N_j$ a triangulation
of $P_j$ with $2\cdot\mesh N_j<\epsilon_j$, $\lambda_j$ be a Lebesgue number of the open cover
$\mathcal{N}_j=\{\st(v,N_j)\,|\,v\in N_j^{(0)}\}$ of $P_j$, $\delta_j>0$, and
$4\cdot\delta_j<\lambda_j$.  Then for all $x\in X$, there exists $v_{x,j}\in\N_j^{(0)}$ such
that $\overline N(p_{n_j,\infty}(x),2\delta_j)\subset\overline\st(v_{x,j},N_j)\subset
\overline N(p_{n_j,\infty}(x),\epsilon_j)$.
\end{lemma}

\begin{proof}By $(*_1)$ of Lemma \ref{intersectinH}, $x\in
P_j^{[\infty]}$, so $p_{n_j,\infty}(x)\in P_j$. It follows from the
fact that $4\cdot\delta_j<\lambda_j$ and the
definition of $\mathcal{N}_j$ that there exists $v_{x,j}\in N_j^{(0)}$ such that
$\overline N(p_{n_j,\infty}(x),2\delta_j)\subset\overline\st(v_{x,j},N_j)$.
Now, $p_{n_j,\infty}(x)\in\overline\st(v_{x,j},N_j)$, and $2\cdot\mesh(N_j)<\epsilon_j$.
This implies that $\overline\st(v_{x,j},N_j)\subset\overline N(p_{n_j,\infty}(x),\epsilon_j)$.
\end{proof}

\begin{lemma}\label{newtechnical}Let $n\in\N$, $K\in\{K(\Z,n),K(\Z/p,n),K(\Q,n)\})$,
and $j\in\N$. Suppose that $\epsilon_j>0$ and a triangulation $N_j$
of $P_j$ have been chosen so that $2\cdot\mesh(N_j)<\epsilon_j$.
Then there exist $\delta_j$, $L_j$, and $\varphi_j$, and for any given $m\in\N$,
there exist $l\in\N_{\geq m}$ and $\epsilon_l$ such that:

\begin{enumerate}
\item $0<\delta_j<2^{1-n_j}=2\cdot2^{-n_j}$,

\item $L_j$ is a subdivision of $N_j$ with $\mesh L_j<\delta_j$,

\item$\varphi_j:|L_j|\to|N_j|$ is a simplicial approximation of $\id_{P_j}$,

\item for each $x\in X$, there exists $v_{x,j}\in N_j^{(0)}\subset L_j^{(0)}$ such that
we have $\overline N(p_{n_j,\infty}(x),2\delta_j)\subset\overline\st(v_{x,j},N_j)\subset
\overline N(p_{n_j,\infty}(x),\epsilon_j),$

\item $j<l$,

\item$9\cdot2^{-n_l}<\epsilon_l$,
\item if $u$, $v\in I^\infty$ and
$\rho(u,v)\leq\epsilon_l$, then $\rho(p_{n_j,\infty}(u),p_{n_j,\infty}(v))<\delta_j$,

\item $N_l$ is a triangulation of $P_l$ with
$2\cdot\mesh N_l<\epsilon_l$ that admits a simplicial approximation
$\overline p_j:|N_l|\to|L_j|$ of $p_{n_j}^{n_l}:P_l\to P_j$, and

\item there is a map $g_K=g_{j,K}:|N_l^{(n+1)}|\to|L_j^{(n+1)}|$
as in $\mathrm{Lemma\,\,\ref{endset}(2)}$ with respect to the current
$(P_l,N_l,L_j,p_{n_j}^{n_l},\overline p_j)$,

\item for any subdivision $L_l$ of $N_l$ and simplicial approximation
$\varphi_l:|L_l|\to|N_l|$ of $\id_{P_l}$,
$g_{j,K}(\varphi_l(|L_l^{(s)}|))\subset|L_j^{(s)}|$ for $s\in\{n,n+1\}$,

\item for all $x\in|L_l^{(n+1)}|$, there exists $\sigma\in L_j^{(n+1)}$ such that
$\{p_{n_j}^{n_l}(x),g_K\circ\varphi_l(x)\}\subset\sigma$, and

\item for all $x\in|L_l^{(n+1)}|$,
$\rho(p_{n_j}^{n_l}(x),g_K\circ\varphi_l(x))<\delta_j$.
\end{enumerate}
\end{lemma}

\begin{proof} Let $\lambda_j$ be a Lebesgue number of the open cover
$\mathcal{N}_j=\{\st(v,N_j)\,|\,v\in N_j^{(0)}\}$ of $P_j$, and find
$\delta_j>0$ so that (1) and $4\cdot\delta_j<\lambda_j$ are true.
Select $L_j$ as in (2) and using Lemma \ref{sipapprox}(1), obtain $\varphi_j$
as needed in (3).  According to Lemma \ref{squeezeplay},
(4) holds true.  Using Lemma \ref{endset}(2), one can obtain $l$ so that (5)-(9) are in
effect.  Proposition \ref{pushandshove2}(2) gives us (10), (10) and
Proposition \ref{pushandshove2}(3) give us (11); (11) and (2) lead to (12).
\end{proof}

\section[Recursion]
{Recursion}\label{Recursion}

\begin{lemma}\label{newtechnicalrecur}Let $X\subset I^\infty$ be a nonempty compactum
and $n\in\N$. Suppose that $(n_j)$ is an increasing sequence in $\N$, $(P_j)$ is a sequence
of nonempty compact polyhedra $P_j\subset I^{n_j}$ satisfying $(1)$-$(3)$ of $\mathrm{Lemma\,\,
\ref{intersectinH}}$, and $$K\in\{K(\Z,n),K(\Z/p,n),K(\Q,n)\})$$ with $X\tau K$.
Let $\epsilon_1=2^{-1}\cdot9\cdot2^{-n_1}$, and $N_1$ be a triangulation of
$P_1$ with $2\cdot\mesh N_1<\epsilon_1$.  Then
there exist a function $l:\N\to\N$ with $l(1)=1$, a sequence
$(\epsilon_j,N_j,\delta_j,L_j,\varphi_j,\overline p_j,g_{j,K})$,
and for each $x\in X$, a sequence $(v_{x,j},N_{x,j},L_{x,j})$ such that for all $j\in\N$,
\begin{enumerate}

\item $0<\delta_j<2^{1-n_{l(j)}}=2\cdot2^{-n_{l(j)}}$,

\item $N_j$ is a triangulation of $P_{l(j)}$ and $L_j$ is a subdivision
of $N_j$ with $\mesh L_j<\delta_j$,

\item$\varphi_j:|L_j|\to|N_j|$ is a simplicial approximation to $\id_{P_{l(j)}}$,

\item$v_{x,j}\in N_j^{(0)}\subset L_j^{(0)}$ and we have $\overline N(p_{n_{l(j)},\infty}(x),2\delta_j)\subset\overline
\st(v_{x,j},N_j)\subset\overline N(p_{n_{l(j)},\infty}(x),\epsilon_j)$,

\item $l(j)<l(j+1)$,

\item$9\cdot2^{-n_{l(j)}}<\epsilon_j$,

\item if $u$, $v\in I^\infty$ and
$\rho(u,v)\leq\epsilon_{j+1}$, then $\rho(p_{n_{l(j)},\infty}(u),p_{n_{l(j)},\infty}(v))<\delta_j$,

\item$N_j$ is a triangulation of $P_{l(j)}$ with $2\cdot\mesh N_j<\epsilon_j$
that admits a simplicial approximation
$\overline p_j:|N_{j+1}|\to|L_j|$ of $p_{n_{l(j)}}^{n_{l(j+1)}}:P_{l(j+1)}\to P_{l(j)}$,

\item there is a map $g_{j,K}:|N_{j+1}^{(n+1)}|\to|L_j^{(n+1)}|$ as in
$\mathrm{Lemma\,\,\ref{endset}(2)}$ with $(P_{l(j+1)},N_{j+1},L_j,
p_{n_{l(j)}}^{n_{l(j+1)}},\overline p_j)$ playing the role of $(P_l,N_l,L_j,p_{n_j}^{n_l},
\overline p_j)$ of that lemma,

\item $g_{j,K}(\varphi_{j+1}(|L_{j+1}^{(s)}|))\subset|L_j^{(s)}|$ for $s\in\{n,n+1\}$,

\item if $x\in|L_{j+1}^{(n+1)}|$, there exists $\sigma\in L_j^{(n+1)}$ such that
$\{p_{n_{l(j)}}^{n_{l(j+1)}}(x),g_{j,K}\circ\varphi_{j+1}(x)\}\subset\sigma$, and

\item if $x\in|L_{j+1}^{(n+1)}|$, $\rho(p_{n_{l(j)}}^{n_{l(j+1)}}(x),g_{j,K}
\circ\varphi_{j+1}(x))<\delta_j$.

\end{enumerate}
\end{lemma}

\begin{proof}Define $l(1)=1$ and $\epsilon_1=2^{-1}\cdot9\cdot2^{-n_{l(1)}}$; select a triangulation $N_1$ of
$P_{l(1)}$ with $2\cdot\mesh N_1<\epsilon_1$.  Then apply Lemma
\ref{newtechnical} recursively to obtain this result.
\end{proof}

\begin{lemma}\label{starfacts} Under the hypothesis of $\mathrm{Lemma\,\,\ref{newtechnicalrecur}}$,
for each $j\in\N$, let $N_{x,j}$, $L_{x,j}$ respectively
be defined as the subcomplexes of $N_j$ and $L_j$ such that $\overline\st(v_{x,j},N_j)=
|N_{x,j}|=|L_{x,j}|$.  Then,
\begin{enumerate}
\item$\overline\st(v_{x,j},N_j)=|N_{x,j}|=|L_{x,j}|$ is a contractible
metrizable continuum,

\item for all $s\geq 0$, $\overline\st(v_{x,j},N_j)\cap|L_j^{(s)}|=|L_{x,j}^{(s)}|$,

\item$v_{x,j}\in\overline\st(v_{x,j},N_j)\cap|L_{x,j}^{(0)}|$,
so $\overline\st(v_{x,j},N_j)\cap|L_{x,j}^{(0)}|\neq\emptyset$,

\item for all $s\geq 1$, $|L_{x,j}^{(s)}|$ is a metrizable continuum and
for all $s\geq 0$, $|L_{x,j}^{(s)}|$ contracts to a point in $|L_{x,j}^{(s+1)}|$,

\item for any abelian group $G$ and $1\leq s<n$, $H_s(|L_{x,j}^{(n)}|;G)=0$,

\item for any abelian group $G$ and $s>n$, $H_s(|L_{x,j}^{(n)}|;G)=0$,

\item $H_n((|L_{x,j}^{(n)}|;\Z)$ is free abelian of finite rank, and

\item $H_0(|L_{x,j}^{(n)}|;\Z)$ is isomorphic to $\Z$, so it is free abelian
of finite rank.
\end{enumerate}
\end{lemma}

\begin{proof}As mentioned in
Section \ref{stars}, $\overline\st(v_{x,j},N_j)$ is a contractible closed subset of the
compact polyhedron $|N_j|$, so it is a contractible metrizable continuum
and we get (1). It is plain that (2) is true.
Surely $v_{x,j}\in\overline\st(v_{x,j},N_j)\cap|N_{x,j}^{(0)}|\subset\overline\st(v_{x,j},N_j)
\cap|L_{x,j}^{(0)}|$, so we have (3).
It follows from (1) that $|L_{x,j}^{(s)}|$ is connected, and since it is
closed in $|L_{x,j}|$, it is a metrizable continuum.  The rest of (4) is a result of
Lemma \ref{ktok+1}.  We get (5) from (1) and Lemma \ref{contractsub}, and (6) is of
course true since $\dim|L_{x,j}^{(n)}|\leq n<s$.  Lemma \ref{isfreeab} gives us (7).
We get (8) from (4).
\end{proof}

\section[Proof of Main Results]
{Proof of Main Results}\label{proofofmain}

We now provide our coordinated proofs of Theorems \ref{cellresol}, \ref{Zmodpreol}, and \ref{rationalres}.

\begin{proof}Let $K\in\{K(\Z,n),K(\Z/p,n),K(\Q,n)\})$, $n\in\N$, and $X\subset I^\infty$ be
a compactum with $X\tau K$. The reader should keep in mind that by
Lemma \ref{equivforcohdim}, $\dim_G X\leq n$ is equivalent to $X\tau K(G,n)$.
Use the notation and facts that were developed in Lemmas \ref{newtechnicalrecur}
and \ref{starfacts} as applied to the current $X$ and $n$.
From Lemma \ref{newtechnicalrecur}(5), one sees that

$(\dag_1)$ $(n_{l(j)})$ is an increasing
sequence in $\N$.

For each $j\in\N$, let $T_j=|L_j^{(n)}|$. Then,
an application of  Lemma \ref{newtechnicalrecur}(2)
to the fact that $|L_j|=P_{l(j)}\neq\emptyset$ shows that

$(\dag_2)$ for all $j\in\N$, $T_j$ is a nonempty compact polyhedron
with $\dim T_j\leq n$.

Using Lemma \ref{newtechnicalrecur}(10), define $$g_j^{j+1}
=g_{j,K}\circ\varphi_{j+1}|T_{j+1}:T_{j+1}\to T_j.$$
This gives us an inverse sequence $\mathbf{T}=(T_j,g_j^{j+1})$ (depending on $K$)
of nonempty compact polyhedra (see $(\dag_2)$).
Let $Z=\lim\mathbf{T}$.  The first thing to observe is that since $T_j$ is a
nonempty compact polyhedron and $\dim T_j\leq n$ for each $j$, then by Theorem \ref{presdim},

$(\dag_3)$  $Z$ is a nonempty metrizable compactum, and $\dim Z\leq n$.

Items (7), (6), (12), and (1) of  Lemma \ref{newtechnicalrecur}, respectively, give
us (1)-(4) of Lemma \ref{technical}, which then provides us with a map $\pi:Z\to X$.
An internal description of the fibers of $\pi$ is now in order.

Now fix $x\in X$.  Using Lemma \ref{newtechnicalrecur}(4)
and Lemma \ref{starfacts}(2), one sees that for
all $j\in\N$, $\overline N(p_{n_{l(j)},\infty}(x),2\cdot\delta_j)\cap T_j=
\overline N(p_{n_{l(j)},\infty}(x),2\cdot\delta_j)\cap|L_j^{(n)}|\subset\overline\st(
v_{x,j},N_j)\cap T_j=\overline\st(v_{x,j},N_j)\cap|L_j^{(n)}|=|L_{x,j}^{(n)}|\subset
\overline N(p_{n_{l(j)},\infty}(x),\epsilon_j)\cap T_j$.  The leftmost and rightmost
sets in the preceding expression correspond respectively to $B_{x,j}$ and $B_{x,j}^{\#}$
in Lemma \ref{technical}.  Looking at $(*_7)$ of Lemma \ref{technical} with $E_{x,j}=|L_{x,j}^{(n)}|$,
one sees that $\mathbf{E}_x=(|L_{x,j}^{(n)}|,g_j^{j+1})$ is an inverse sequence such
that $\pi\inv(x)=\lim\mathbf{E}_x$.  An application of Lemma \ref{starfacts}(3,4) shows that
each coordinate space in $\mathbf{E}_x$ is nonempty and each is a metrizable continuum; therefore,

$(\dag_4)$ for each $x\in X$, $\pi\inv(x)$ is a nonempty metrizable continuum.

{\bf Proof of Theorem \ref{cellresol}.}  This time $K=K(\Z,n)$.
Using \ref{newtechnicalrecur}(9), we may apply
Proposition \ref{pushandshove2}(8) to see that each $g_j^{j+1}$ in $\mathbf{E}_x$ is
null homotopic. Apply Lemma \ref{trivshape} to complete this part of the proof.

{\bf Proof of Theorems \ref{Zmodpreol} and \ref{rationalres}.} In these cases we have that
$K\in\{K(\Z/p,n),K(\Q,n)\})$. By Proposition \ref{pushandshove2}(9,10)

$(\dag_5)$   if $K=K(\Z/p,n)$, then the induced homomorphism
$H_n(g_j^{j+1};\Z/p):H_n(|L_{x,j+1}^{(n)}|;\Z/p)\to
H_n(|L_{x,j}^{(n)}|;\Z/p)$ is trivial, and

$(\dag_6)$ if $K=K(\Q,n)$, then the induced homomorphism $H_n(g_j^{j+1};\Z):H_n(|L_{x,j+1}^{(n)}|;\Z)\to
H_n(|L_{x,j}^{(n)}|;\Z)$ is trivial.

Our proof of Theorem \ref{Zmodpreol} follows from $(\dag_5)$, Lemma \ref{starfacts}(5,6,7)
(Lemma \ref{starfacts}(8) in case $n=1$),
and Lemma \ref{detectGacy2}.  Our proof  of Theorem \ref{rationalres} follows from $(\dag_6)$,
Lemma \ref{starfacts}(5,6) (remember that $n\geq2$ in this setting), and Lemma \ref{detectGacy1}.
\end{proof}

\end{document}